\newtheorem{lemma}{Lemma}[section]
\newtheorem{proposition}[lemma]{Proposition}
\newtheorem{theorem}[lemma]{Theorem}
\newtheorem{corollary}[lemma]{Corollary}
\newtheorem{maintheorem}{Theorem}
\theoremstyle{definition}
\newtheorem{definition}[lemma]{Definition}
\newtheorem{conjecture}[lemma]{Conjecture}
\newtheorem{question}[lemma]{Question}
\theoremstyle{remark}
\newtheorem{remark}[lemma]{Remark}
\newtheorem*{remark*}{Remark}
\newtheorem*{problem*}{Problem}
\newcommand\Hom{{\rm Hom}}
\newcommand\End{\rm End}
\def\Id{\operatorname{Id}}
\DeclareMathOperator{\sign}{sign}
\DeclareMathOperator{\ad}{ad}
\title{Codimension Growth of Lie algebras with a generalized action}
\author{Geoffrey Janssens}
\address{(Geoffrey Janssens) \newline Departement Wiskunde, Vrije Universiteit Brussel,
Pleinlaan $2$, 1050 Elsene, Belgium \newline E-mail address: {\tt geofjans@vub.ac.be}}
\begin{document}

\begin{abstract}
Let $L$ be a finite dimensional Lie $F$-algebra endowed with a generalized action by an associative algebra $H$. We investigate the exponential growth rate of the sequence of $H$-graded codimensions $c_n^H(L)$ of $L$ which is a measure for the number of non-polynomial $H$-identities of $L$. More precisely, we construct the first example of an $S$-graded Lie algebra having a non-integer, even irrational, exponential growth rate $\lim_{n\rightarrow \infty} \sqrt[n]{c_n^{S}(L)}$. Hereby $S$ is a semigroup and an exact value is given. On the other hand, returning to general $H$, if $L$ is semisimple and also semisimple for the $H$-action we prove the analog of Amitsur's conjecture (i.e. $\lim_{n\rightarrow \infty} \sqrt[n]{c_n^{H}(L)} \in \mathbb{Z}$). Moreover if $H=FS$ is a semigroup algebra the semisimplicity on $L$ can be dropped which is in strong contract to the associative setting.
\end{abstract}

\maketitle

\newcommand\blfootnote[1]{%
  \begingroup
  \renewcommand\thefootnote{}\footnote{#1}%
  \addtocounter{footnote}{-1}%
  \endgroup
}

\blfootnote{\textit{2010 Mathematics Subject Classification}. 17B01, 17B70, 20C30}
\blfootnote{\textit{Key words and phrases}. Polynomial identity, Lie algebra, semigroup grading, representation theory of symmetric group, Amitsur conjecture}
\blfootnote{The author is supported by Fonds Wetenschappelijk Onderzoek - Vlaanderen (FWO).}

\section{Introduction}

Let $L$ be a finite dimensional Lie algebra over some field $F$ of characteristic $0$. The codimension growth of associative and non-associative algebras was studied heavily in the last 40-50 years (see for instance \cite{GiaZaibook, BelKarRow}). It provides an important tool to study the 'size' of the T-ideal of polynomial identities of $L$ in asymptotic terms. In recent years, one has investigated this problem for several classes of rings with taking into account some extra structure, such as being graded by a group. To do so, one has to give appropriate definitions for the identities and notions considered.

In this article we will endow $L$ with a {\it generalized action} of an associative unital $F$-algebra $H$, i.e. a homomorphism $\varphi: H \rightarrow \End_F(L)$ such that for every $h\in H$ one has the compatibility rule
\begin{equation} 
 h . [l_1,l_2] = \sum\limits_{i=1}^k [h'_{i} l_1 , h''_{i} l_2]
\end{equation}
for some $h'_{i}, h''_{i} \in H$. We are interested in $H$-identities and the information contained by them. Let us recall briefly some definitions. Let  $X =  \{ x_1, x_2, \ldots  \}$ be a set of non-commutative variables and $L\left( X \mid H  \right)$ be the absolutely free $H$-module Lie algebra on $X$ (see \cite[Section 1.3.]{ASGordienko5} for precise definitions).

A polynomial $f(x_{i_1}^{h_1}, \ldots,x_{i_n}^{h_n}) \in L(X \mid H)$ is called an $H$-polynomial identity (for short, $H$-PI) if $f(l_{i_1}^{h_1}, \ldots,l_{i_n}^{h_n})=0$ for all $l_{i_j} \in  L$. Let $\Id^H(L)$ be the $T$-ideal of $H$-identities of $L$. Then one considers the relatively free $H$-module algebra $L( X \mid H) / \Id^H(L)$ and denotes by $c_n^H(L)$ the dimension of the subspace of multilinear elements in $n$ free generators. The sequence $c_n^H(L),~n=1,2,\ldots$ is called the sequence of {\it $H$-codimensions of $L$.} 

If $L$ satisfies an ordinary polynomial identity (i.e. $H = F$.) and $H$ is finite dimensional then it turns out that $c_n^H(L) \leq (\dim \varphi(H))^n c_n(L)$, where $ c_n(L) := c_n^{F}(L)$, holds for all $n$ (the proof is analog to \cite[Lemma 2]{ASGordienko5})). Since we assume $L$ to be finite dimensional,  $c_n(L)$ is exponentially bounded \cite[Theorem 12.3.11]{GiaZaibook} and hence by the above $c_n^H(L)$ also. Therefore it makes again sense to wonder what is the exponential growth rate of the sequence $(c_n^H(L))_n$. For associative algebras, without consideration of any extra action, Amitsur made in the 1980's a conjecture. It is natural to ask  the analogue in our setting hereof.

\begin{question}[Amitsur's conjecture] \label{amitsur voor generalized action vraag}
Let $L$ be a finite dimensional Lie algebra with a generalized action by an associative algebra $H$. When does $\lim\limits_{n \rightarrow \infty} \sqrt[n]{c_n^{H}(L)}$ exists and is moreover an integer?
\end{question}

If the limit exists we called the number the {\it $H$-exponent of $L$}. Above question was answered positively by Zaicev \cite{Zai, ZaiInt} in the classical setting. Furthermore he provided a precise formula connecting it thitly with the algebraic structure of $L$. Previously, it had been solved in case $L$ was solvable, semisimple or its solvable radical coincides with the nilpotent radical in respectively \cite{MisPet, GiaRegZai,GiaRegZai2}

Afterwards Gordienko generalized Zaicev's theorem by, amongst others, including gradations of finite abelian groups \cite{Gorliefirst} and, most recently, to $H$-module Lie algebras where $H$ is some finite dimensional semisimple Hopf algebra \cite[Theorem 10]{ASGordienko5}. As a particular case he proved that the graded exponent $\lim_{n \rightarrow \infty} \sqrt[n]{c_n^G(L)} = \exp^G(L)$ exists and is an integer for any group $G$. By a careful analysis of the existing proofs one can prove that this even hold if $L$ is graded by some cancellative semigroup. However in this paper we will not explain this further.

In the first part of the paper we aim at providing a counterexample to the variant of Amitsur's conjecture in the case of a grading by a semigroup, say $S$ (which alternatively may be viewed as a generalized action by $(FS)^{*}$, the dual of the semigroup algebra $FS$. Note that this is a bialgebra which is a Hopf algebra if and only if $S$ is in fact a group). More concretely we obtain the following result.

\begin{maintheorem}[\Cref{th: irrational lie alg example}]
Let $L$ be the $(\mathbb{Z}_2,\cdot)$-graded Lie algebra constructed in \Cref{frac expo sectie}. Then $\lim\limits_{n \rightarrow \infty} \sqrt[n]{c_n^{\mathbb{Z}_2}(L)} = 2 + 2\sqrt{2}$.
\end{maintheorem}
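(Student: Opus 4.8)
The plan is to follow the general blueprint for computing codimension exponents via representation theory of the symmetric group, adapted to the $\mathbb{Z}_2$-graded (i.e. semigroup-graded) setting. First I would set up the $S_n$-action on the multilinear component $P_n^{\mathbb{Z}_2}$ of the relatively free graded Lie algebra, decompose its character into irreducibles indexed by pairs of partitions $\lambda = (\lambda^{(0)}, \lambda^{(1)})$ with $|\lambda^{(0)}| + |\lambda^{(1)}| = n$ (matching the two homogeneous components for the group $\mathbb{Z}_2$), and express $c_n^{\mathbb{Z}_2}(L)$ as the sum $\sum_\lambda m_\lambda \dim S^\lambda$ of multiplicities times dimensions of the associated Specht-type modules. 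The exponent is then governed, by the usual hook-length/Stirling asymptotics, by the maximal value of the entropy-type function $\Phi(x_0, x_1) = \prod_i x_i^{-x_i}$ over the relevant region, scaled appropriately — so the real content is identifying exactly which shapes $\lambda$ give nonzero multiplicity $m_\lambda$ and with what asymptotic growth, since this is dictated by the concrete structure of the Lie algebra $L$ built in \Cref{frac expo sectie}.

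Next I would carry out the structural analysis of $L$ itself. One writes $L = B \oplus J$ with $B$ a graded-semisimple-ish part (or a suitable direct sum of graded-simple pieces together with the bimodule data) and $J$ the graded radical, and one identifies the "admissible" subalgebras — the graded subalgebras $B_1 \oplus \cdots \oplus B_k$ together with a nonzero alternating multilinear evaluation threading through a chain of irreducible $B_i$-bimodules inside $J$. The lower bound on $c_n^{\mathbb{Z}_2}(L)$ comes from exhibiting an explicit nonzero multilinear graded polynomial, alternating on large sets of variables distributed among the homogeneous components in the optimal ratio, evaluated on such an admissible configuration; the upper bound comes from bounding the colength and showing every contributing $\lambda$ fits inside the predicted hook/rectangle region. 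The number $2 + 2\sqrt{2}$ should then emerge as the maximum of the associated Lagrange-multiplier optimization: one gets an objective of the shape $d_0^{t_0} d_1^{t_1} \cdot t_0^{-t_0} t_1^{-t_1} (1 - t_0 - t_1)^{-(1-t_0-t_1)}$ with the graded dimensions $d_0, d_1$ of $L$ feeding in, and solving $\partial/\partial t_i = 0$ yields an optimal point with irrational coordinates, producing the value $2 + 2\sqrt 2$. I would double-check this by reverse-engineering: $2 + 2\sqrt 2$ is a root of $x^2 - 4x - 4$, so the algebra $L$ must be engineered precisely so that this quadratic is what the optimization produces — presumably $L$ has a $2$-dimensional even part and a $2$-dimensional odd part (or similar small numerics) arranged so the "mixing" between the two graded components is what breaks integrality.

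The key subtlety, and the reason the example works at all, is that for a genuine semigroup grading (as opposed to a group grading) the support of the grading need not be closed under the operation in the way that forces the optimization region to have rational extreme points. Concretely, in the group-graded case Gordienko's theorem forces the answer to be an integer because the relevant polytope is a product of simplices with integer-related vertices; here, because $(\mathbb{Z}_2, \cdot)$ is being used as a semigroup with a degenerate multiplication (so that, e.g., the odd part can only appear in restricted positions), the feasible region for the ratios $(t_0, t_1)$ is cut out by an extra linear or bilinear constraint coming from the compatibility rule, and the constrained maximum lands at an irrational point. So the main obstacle I anticipate is twofold: (i) correctly pinning down the admissible subalgebras and the exact combinatorial constraint they impose on which bipartitions $\lambda$ contribute — this is where a miscount would change the final constant — and (ii) making the upper bound match the lower bound, i.e. showing no "unexpected" polynomials push the exponent higher, which typically requires a careful colength estimate and an argument that the radical contributes only boundedly many "layers." Once both bounds are shown to be governed by the same optimization, evaluating it to get exactly $2 + 2\sqrt{2}$ is a finite computation.
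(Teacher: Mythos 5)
Your general blueprint (decompose the multilinear part as an $S_n$-module, bound the colength polynomially, locate the region of partitions with nonzero multiplicity, and match an entropy-type upper bound with an explicit alternating polynomial for the lower bound) is indeed the skeleton of the paper's argument. But two of your central working hypotheses are wrong for this example, and they are wrong in a way that would derail the proof rather than merely change constants.

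First, your structural step --- writing $L = B \oplus J$ with $B$ a graded-semisimple part and $J$ the \emph{graded} radical, and then hunting for ``admissible subalgebras'' threading through irreducible bimodules in $J$ --- cannot be carried out here: the solvable radical of $L$ is $(0,\langle u,v\rangle_L)$, which is \emph{not} a graded subspace, and the only graded ideals are $0$, $L$ and $(\mathfrak{sl}_2,0)$. The absence of any graded Levi/Wedderburn-type decomposition is precisely what makes the counterexample possible (if $L$ were graded-semisimple the exponent would be an integer). The paper replaces this structure theory by a direct combinatorial device: a valuation $\theta$ on the basis $\{(u,0),(u,u),(v,0),(v,v),(t,0)\}$ with $\theta(v,\cdot)=1$, $\theta(t,0)=-1$, $\theta(u,\cdot)=0$, which is additive on nonzero brackets and bounded between $-1$ and $1$; this forces every contributing partition $\lambda\vdash n$ (an \emph{ordinary} partition with at most $5$ parts, not a bipartition) to satisfy $\lambda_4+\lambda_5\le\lambda_1+1$. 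Second, your reverse-engineered numerics are off: $\dim L_0=3$ and $\dim L_1=2$, and the objective is not $d_0^{t_0}d_1^{t_1}t_0^{-t_0}t_1^{-t_1}(1-t_0-t_1)^{-(1-t_0-t_1)}$ but rather $\Phi(x_1,\dots,x_5)=\prod_i x_i^{-x_i}$ maximized over the simplex cut by the extra linear constraint $x_4+x_5\le x_1$; it is this truncation of the standard simplex (whose unconstrained maximum would give the integer $5$) that produces $2+2\sqrt2$. Finally, the polynomial bound on $\sum_\lambda m_\lambda$ is not routine for a semigroup grading: the paper proves a semigroup-graded Ado theorem to embed $L$ into a graded associative algebra and then quotes the associative colength bound. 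Without these three ingredients --- the valuation argument in place of a graded radical decomposition, the correct $5$-variable constrained optimization, and the graded Ado theorem --- your outline does not close.
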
 

It is important to remark that our counterexample is {\it not} graded semisimple. Interestingly, in \Cref{ss niet nodig in graded case} we point out that {\it if $L$ is  (semigroup) graded-semisimple, then \Cref{amitsur voor generalized action vraag} is true}. This is in strong contrast with the associative case. Indeed, even stronger, in \cite{ASGordienko13, GorJanJes} a class of semigroup-graded simple algebras was constructed with non-integer exponent. Moreover, approximately, any square root of an integer can be realised as the PI-exponent of such an algebra. We would also like to mention that along the way we prove a semigroup-graded version of Ado's theorem, see \Cref{graded ado theorem}.

In view of the deficiency just mentioned, we investigate in \Cref{sectie positive results Exponent Lie algebras} in general the case that $L$ is $H$-semisimple for a generalized action by an arbitrary associate algebra $H$. Hereby we obtain the following result:

\begin{maintheorem}[\Cref{exp is dim voor H-simple}, \Cref{H-ss integer exp}]
Let $L$ be an $H$-semisimple, semisimple Lie algebra and let $L= L_1 \oplus \ldots \oplus L_m$ with $L_i$ an $H$-simple algebra . Then 
$$\exp^H(L) = \lim\limits_{n \rightarrow \infty} \sqrt[n]{c_n^{H}(L)} = \max_{1 \leq i \leq m} \{ \dim_F L_i \}.$$
\end{maintheorem}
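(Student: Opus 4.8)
The plan is to establish the upper and lower bounds on $c_n^H(L)$ separately and then combine them via the standard asymptotic argument. For the upper bound, I would first use the fact that $c_n^H(L) \leq (\dim \varphi(H))^n c_n(L)$ together with the known result (Zaicev) that for a semisimple Lie algebra $L = L_1 \oplus \cdots \oplus L_m$ one has $\exp(L) = \dim_F L$ when $L$ is simple, and more generally $\exp(L) = \max_i \dim_F L_i$. However this only gives $\exp^H(L) \leq (\dim \varphi(H)) \cdot \max_i \dim_F L_i$, which is too weak. So instead I would adapt Gordienko's approach directly: pass to the multilinear component $P_n^H$ of the relatively free algebra, view it as an $S_n$-module, and decompose it into irreducibles $\chi_\lambda$ with $\lambda \vdash n$. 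The key is to bound the degrees $\lambda$ (and their multiplicities) that can appear. Here the $H$-simple structure of each $L_i$ controls which Young diagrams can occur: because each $L_i$ is $H$-simple and finite dimensional, a multilinear $H$-polynomial that is alternating in more than $\dim_F L_i$ variables must vanish on $L_i$ (substituting basis elements forces a repeat). This caps the number of columns, so $\lambda$ fits in a strip of height $\max_i \dim_F L_i$, and then the hook/dimension estimates for $S_n$ give $c_n^H(L) \leq C \, n^t \, (\max_i \dim_F L_i)^n$ for suitable constants.

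For the lower bound, I would construct, for each $n$, an explicit nonzero multilinear $H$-identity-free polynomial that is alternating on $d := \max_i \dim_F L_i$ sets of variables. Fix the index $i_0$ with $\dim_F L_{i_0} = d$; choose a basis $e_1,\dots,e_d$ of $L_{i_0}$. The goal is to exhibit a multilinear Lie polynomial (built from iterated brackets, with $H$-actions inserted) in roughly $n$ variables, partitioned into $\approx n/d$ disjoint alternating sets each of size $d$, that does not vanish identically on $L_{i_0}$. The classical construction for simple Lie algebras (using the Killing form / the fact that a simple Lie algebra has a nonzero polynomial alternating on a basis, via the regular representation argument in Giambruno–Zaicev) should carry over, since $L_{i_0}$ is in particular a simple Lie algebra; the $H$-action only adds flexibility, not obstruction. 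Evaluating the corresponding $S_n$-character, the Young diagram is a rectangle of height $d$, whose $S_n$-dimension grows like $d^n$ up to polynomial factors, yielding $c_n^H(L) \geq C' n^{-t'} d^n$.

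Combining the two bounds gives $\sqrt[n]{c_n^H(L)} \to d = \max_i \dim_F L_i$, which is the claimed equality; in particular the limit exists and is an integer, so the two cited statements \Cref{exp is dim voor H-simple} and \Cref{H-ss integer exp} both follow. (For the case $m=1$, i.e. $L$ itself $H$-simple, one gets $\exp^H(L) = \dim_F L$ directly.)

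The main obstacle I anticipate is the lower bound: one must produce a genuinely $H$-polynomial (respecting the generalized-action compatibility rule) that is highly alternating and nonvanishing on a simple summand, and control its $S_n$-isotypic decomposition precisely enough. The subtlety is that inserting elements $h \in H$ into brackets could a priori collapse the alternating structure or force the polynomial into the $H$-identity ideal; one needs to choose the $h$'s (essentially from a Wedderburn-type decomposition of $\varphi(H)$ acting on $L_{i_0}$, using $H$-simplicity to guarantee that $\varphi(H)$ acts "transitively enough" on $L_{i_0}$) so that the resulting polynomial still separates points. The upper bound, by contrast, is more routine once the height restriction on Young diagrams is in place, since it follows the now-standard template from \cite{GiaZaibook} and \cite{ASGordienko5}.
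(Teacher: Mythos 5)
Your overall architecture (a Capelli-type restriction on the shapes for the upper bound, a highly alternating non-identity on the largest summand for the lower bound, hook/Stirling estimates to combine) matches the paper's, and the upper bound is essentially right: since each $L_i$ is an $H$-invariant ideal, every nonzero basis evaluation of a multilinear $H$-polynomial of degree $\geq 2$ lands in a single summand, so $L$ satisfies the Capelli identity of rank $\max_i\dim_F L_i+1$; this is exactly how \Cref{H-ss integer exp} is deduced (via \cite[Theorem 12.2.13]{GiaZaibook}), and for the $H$-simple case the paper uses the even more elementary bound $c_n^H(L)\le\dim_F\Hom_F(L^{\otimes n},L)=(\dim_F L)^{n+1}$.

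The genuine gap is in the lower bound, precisely at the point you flag as the main obstacle, and your proposed resolution rests on a false premise. You write that ``$L_{i_0}$ is in particular a simple Lie algebra'' and that the $H$-action ``only adds flexibility, not obstruction.'' Neither is true: $H$-simple does not imply simple. An $H$-simple summand of a semisimple Lie algebra can be $\mathfrak{g}\oplus\cdots\oplus\mathfrak{g}$ ($r$ copies of a simple $\mathfrak{g}$) with the $H$-action mixing the copies; its \emph{ordinary} PI-exponent is then $\dim_F\mathfrak{g}=\dim_F L_{i_0}/r$, so the classical construction for simple Lie algebras can only yield polynomials alternating in sets of size $\dim_F\mathfrak{g}$, which is strictly smaller than what you need. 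The $H$-action is therefore essential to the lower bound, not a convenience. Moreover, it is not $\varphi(H)$ alone that acts ``transitively enough'' (it need not act irreducibly at all); what $H$-simplicity gives is that $L_{i_0}$ is an irreducible faithful module over the joint multiplication algebra $M(L_{i_0})=\Span_F\{\rho(H),\ad(L_{i_0})\}$, whence by the density theorem $M(L_{i_0})=\End_F(L_{i_0})\cong M_t(F)$ with $t=\dim_F L_{i_0}$. This is the content of \Cref{non poly voor H-simple ss}: one pushes all $\rho(h)$'s to the right using equation (\ref{actie H naar rechts bewegen}), writes every endomorphism as a span of elements $\ad_l\circ\rho(h)$, substitutes these into the Regev central polynomial of $M_t(F)$, and applies Razmyslov's trick to multiply the alternating sets of size $t$. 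Without this step (or an equivalent), your construction does not produce alternating sets of size $\max_i\dim_F L_i$, and the lower bound fails.
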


Hence it was exactly that lack of graded structure theory that enabled our counterexample.

As mentioned above, our counterexample shows that the $H$-semisimple condition is necessary for a positive answer on \Cref{amitsur voor generalized action vraag}. However, we do not know whether the condition that $L$ must be semisimple is necessary. In fact we expect it isn't. More precisely, we expect that the right condition is the one of $H$-nice, in the spirit of Gordienko \cite[section 1.7.]{ASGordienko5}. The condition of $H$-nice expresses that the classical structural results have an $H$-version (i.e. nilpotent and solvable radical are $H$-invariant, $H$-version of Levi decomposition, Wedderburn-Malcev and Weyl theorem). Therefore we formulate the following conjecture which is a variant of Regev and Amitsur's conjecture.

\begin{conjecture}
Let $F$ be an algebraicaly closed field with $\text{char}(F)=0$ and let $L$ be a finite dimensional Lie algebra with a generalized action by an associative algebra $H$. If $L$ is $H$-nice then there exist constants $C\in \mathbb{R}, t \in \frac{\mathbb{Z}}{2}, d \in \mathbb{Z}$ such that
$$c_n^{H}(L) \simeq C n^{t} d^n.$$
\end{conjecture}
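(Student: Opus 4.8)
The plan is to transplant to the $H$-equivariant setting the proof of Regev's conjecture for ordinary PI-algebras (following Giambruno--Zaicev) together with Zaicev's formula for the exponent, using the $H$-nice hypothesis in place of every appeal to classical structure theory. First one reduces to $H$ finite dimensional: a generalized action factors as $\varphi\colon H\to\End_F L$ with $\dim_F L<\infty$, so replacing $H$ by $\varphi(H)$ changes neither $\Id^H(L)$ nor the $c_n^H(L)$. Then, by $H$-niceness, fix an $H$-invariant Levi decomposition $L=B\oplus J$ with $B$ semisimple, $J=\Rad(L)$ solvable and nilpotent radical $N$ also $H$-invariant, together with the $H$-versions of Weyl's theorem and of Wedderburn--Malcev, and decompose $B=B_1\oplus\cdots\oplus B_q$ into $H$-simple summands.

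Next, pass to the representation theory of $S_n$. Let $P_n^H$ be the space of multilinear $H$-polynomials of degree $n$ and $P_n^H(L)=P_n^H/(P_n^H\cap\Id^H(L))$; this is an $FS_n$-module with cocharacter $\chi_n^H(L)=\sum_{\lambda\vdash n}m_\lambda\,\chi_\lambda$ and $c_n^H(L)=\sum_\lambda m_\lambda\dim\chi_\lambda$. The heart of the argument is a tight description of the contributing $\lambda$, in three layers. (i) Two-sided exponential bounds $C_1n^{a}d^n\le c_n^H(L)\le C_2n^{b}d^n$ with $d=\exp^H(L)\in\Z$: for the semisimple part this is exactly the Theorem quoted above, $\exp^H(B)=\max_i\dim B_i$, and in general one expects the $H$-analogue of Zaicev's formula in terms of the $H$-simple Levi factors and their adjoint action on a quotient of $J$. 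The lower bound is produced by exhibiting explicit multilinear alternating $H$-polynomials that are non-identities on a suitable subalgebra; the upper bound follows by confining the cocharacter to an appropriate hook and estimating dimensions there. (ii) A thinness statement: every $\lambda$ with $m_\lambda\ne0$ lies, up to a shift bounded independently of $n$, inside the polytope of partitions with at most $k$ parts and at most $l$ columns for suitable $k,l$ with $k+l=d$, and $m_\lambda$ is polynomially bounded in $n$. (iii) Stabilization: $m_\lambda$ is eventually constant along the relevant cones of partitions.

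Granting (i)--(iii), the asymptotics of $c_n^H(L)=\sum_\lambda m_\lambda\dim\chi_\lambda$ reduce to estimating $\sum\dim\chi_\lambda$ over the admissible region, which is the Berele--Regev/Giambruno--Zaicev analytic step: write $\lambda=\bar\lambda+(\text{bounded part})$, apply the hook length formula and Stirling, and the sum becomes a multidimensional Riemann sum evaluated by Laplace's method. The entropy functional $-\sum x_i\log x_i$ attains its maximum over the defining polytope at a unique interior critical point with nondegenerate Hessian; the value of that maximum is $\log d$, and the Gaussian correction coming from the Hessian yields the factor $n^{t}$ with $t=-s/2$, where $s$ is the number of independent free directions of the polytope, hence $t\in\tfrac12\Z$, together with a positive constant $C$. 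In particular the counterexample of the first part of the paper cannot be $H$-nice, since its exponent is irrational.

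The main obstacle is layers (ii)--(iii): controlling the cocharacter tightly enough. The exponential rate $d$ needs only the crude two-sided bounds, but the exact polynomial correction $n^t$ is extremely sensitive and forces one to prove that $m_\lambda$ is polynomially bounded and asymptotically constant on the correct cones. Classically this rests on the Wedderburn--Malcev decomposition, on fine properties of the adjoint action of $B$ on $J/N$, and on delicate Young-tableau combinatorics (the multiplicity-stabilization lemmas); carrying all of this through $H$-equivariantly is precisely where the full force of $H$-niceness has to be deployed, and it is likely that genuinely new $H$-versions of those stabilization lemmas must be proved. By contrast, once the combinatorial input is secured the Laplace-method step is essentially formal.
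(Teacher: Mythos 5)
This statement appears in the paper as a \emph{conjecture}, not a theorem, and your proposal does not close it: it is a programme whose decisive steps are exactly the ones that are unknown. The Laplace/entropy analysis in your final layer is indeed essentially formal once (i)--(iii) are granted, and layer (i) (integrality of $d=\exp^H(L)$) is at least plausible under $H$-niceness, being a theorem of Gordienko when $H$ is a Hopf algebra. But layers (ii) and (iii) --- polynomial boundedness \emph{together with} eventual stabilization of the multiplicities $m_\lambda$ along the relevant cones of partitions --- are not available even in the classical case $H=F$ for finite-dimensional Lie algebras. The paper is explicit about why: the precise asymptotics $c_n(L)\simeq Cn^{t}d^{n}$ rest, in the associative theory of Berele--Regev and Giambruno--Zaicev, on the existence of Kemer-type polynomials, and no Lie-algebra analogue of Kemer polynomials is known. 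So ``transplanting the proof of Regev's conjecture for ordinary PI-algebras'' is not a transplantation of an existing Lie-theoretic proof into the $H$-equivariant world; the base case itself is open. Your own closing paragraph concedes this, which means the proposal is an accurate description of the obstruction rather than a proof.

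Two smaller inaccuracies are worth flagging. First, confinement of the cocharacter to a hook with $k+l=d$ is an associative phenomenon; for a finite-dimensional Lie algebra of dimension $t$ what one gets for free is only the strip condition $\lambda_{t+1}=0$ (this is precisely what is used in \Cref{region for the partitions}), and any sharper admissible region must be extracted by hand, as the paper does via the valuation $\theta$ for its specific example. Second, even the existence and integrality of $\lim_{n}\sqrt[n]{c_n^{H}(L)}$ for a general $H$-nice $L$ under a generalized action of an \emph{arbitrary} associative algebra $H$ is not established by the results the paper proves or cites (only the Hopf-algebra case and the $H$-semisimple semisimple case of \Cref{exp is dim voor H-simple} and \Cref{H-ss integer exp} are); so layer (i) also requires proof rather than citation. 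None of this makes the strategy wrong --- it is the natural strategy, and identifying the multiplicity-stabilization step as the crux is correct --- but as written the proposal establishes nothing beyond what is already known, which is why the statement remains a conjecture.
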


In case that $H$ is a Hopf algebra, it was proven in \cite[Theorem 9]{ASGordienko5} that $\lim\limits_{n \rightarrow \infty} \sqrt[n]{c_n^{H}(L)}$ exists and is an integer. However, even in the classical setting of a trivial action, the statement about polynomial growth rate $t$ is still open! The main reason for this is the lack of knowledge about the existence of Kemer-type polynomials for Lie algebras. Any progress hereon would be very interesting. \vspace{0,3cm}

\noindent \textbf{Acknowledgment.} 
The author would like to warmly thank Alexey Gordienko for proposing the initial problem. He is also thankful to Mikhail Zaicev for expressing interest in preliminary results which further stimulated this research. \vspace{0,2cm}

\noindent {\it Conventions.}
Throughout the full paper we will assume (except stated explicitly otherwise) and denote the following:
\begin{itemize}
\item $F$ is a field of characteristic $0$,
\item  $L$  a finite dimensional Lie algebra over $F$,
\item  the commutator $[\cdot, \cdot]$ for multiplication in $L$,
\item  all commutators will be left-normed, i.e.\ $[x_1, \ldots, x_n]=[[x_1, \ldots, x_{n-1}],x_n]$
\item  $\langle a,b \rangle_L$ is the Lie algebra over $F$ generated by $a,b \in L$,
\item  $A^{[-]}$ denotes the Lie algebra associated to an associative algebra $A$.
\end{itemize}

\section{A graded non-integer Exponent} \label{frac expo sectie} \label{sectie non-integer Exp Lie algebras}
For the necessary background on graded polynomial identities and the graded free Lie algebra we refer the reader to \cite[Section 1.1.]{ASGordienko5} (all the definitions there are formulated for group-grading however they stay unchanged for semigroup gradings). Note that the infinite family of associative algebras $A$ constructed in \cite{GorJanJes} can not be used to construct a counterexample, because their $T$-gradation does not yield a gradation on $A^{[-]}$. \vspace{0,1cm}

{\it Construction of the graded Lie algebra.}\vspace{0,1cm}

\noindent We now define the main protagonist of this section. Let 
$$t := e_{12}= \left( \begin{array}{ll}
0 & 1 \\
0 & 0 \\
\end{array}\right), v := e_{21} = \left( \begin{array}{ll}
0 & 0 \\
1 & 0 \\
\end{array}\right), u :=e_{11} - e_{22}= \left( \begin{array}{ll}
1 & 0 \\
0 & -1 \\
\end{array}\right)$$
 and $I = \text{span}_F \{ u,v,t \}$. Note that $[u,v]=-2v,~[u,t]=2t$ and $[v,t]=-u$. Thus $I \cong \mathfrak{sl}_2(F)$, the Lie algebra consisting of the trace zero matrices over $F$. This is a simple Lie algebra of dimension 3. Denote by $\langle u,v \rangle_L $ the Lie subalgebra of $I$ generated by $\{ u,v \}$. Then we define $$L = I \oplus \langle u,v \rangle_L,$$
 with the usual bracket $[(a,b); (c,d)] = \left([a,c],[b,d] \right)$. Further we grade $L= L_0 \oplus L_1$ by the semigroup $(\mathbb{Z}_2,.)$ with $L_0 = (\mathfrak{sl}_2(F),0)$ and $L_1 = \{ (a,a) \mid a \in \langle u,v \rangle_L \}$. \vspace{0,1cm}

{\it Statement and Sketch proof.}\vspace{0,1cm}

\noindent We prove in this section that the graded exponent of $L$ is irrational. For ease of notation, we write in the remainder of the section $\exp^{\mathbb{Z}_2}(L)$ and $c_n^{\mathbb{Z}_2}(L)$ instead of respectively $\exp^{\mathbb{Z}_2\text{-}\mathrm{gr}}(L)$ and $c_n^{\mathbb{Z}_2\text{-}\mathrm{gr}}(L)$.
\begin{theorem}\label{th: irrational lie alg example}
Let $L$ be the Lie algebra with $(\mathbb{Z}_2,\cdot)$-grading as above. Then $\exp^{\mathbb{Z}_2}(L)= \lim\limits_{n \rightarrow \infty} \sqrt[n]{c_n^{\mathbb{Z}_2}(L)} = 2 + 2\sqrt{2}$.
\end{theorem}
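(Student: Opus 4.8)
The plan is to compute the graded exponent via the representation-theoretic machinery for graded codimensions, following the Giambruno--Zaicev--Gordienko circle of ideas. First I would set up the decomposition of the multilinear part $P_n^{\mathbb{Z}_2}$ of the relatively free graded algebra as an $S_n$-module, and reduce the computation of $\asymp \sqrt[n]{c_n^{\mathbb{Z}_2}(L)}$ to the maximal value of $\dim(\lambda^{(0)}) \cdots$-type products over multipartitions $(\lambda^{(0)},\lambda^{(1)})$ of $n$ that actually occur in the cocharacter with nonzero multiplicity. The key structural input is that $L = I \oplus \langle u,v\rangle_L$ with $L_0=(\mathfrak{sl}_2(F),0)$ three-dimensional and $L_1=\{(a,a): a\in\langle u,v\rangle_L\}$; since $\langle u,v\rangle_L$ is the Borel-type subalgebra $\mathrm{span}_F\{u,v\}$ (because $[u,v]=-2v$ forces closure on just $u,v$), we have $\dim L_0 = 3$ and $\dim L_1 = 2$. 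The heuristic target $2+2\sqrt2$ should emerge as $\big(\sqrt{\dim L_0}+\sqrt{\dim L_1}\big)^2 = (\sqrt3+\sqrt2)^2$? — no; rather as an optimization $\max_{0\le x\le 1}\big(x\mapsto \tfrac{3^x 2^{1-x}}{x^x(1-x)^{1-x}}\big)$ combined with the alternation structure, and one must verify the arithmetic gives exactly $2+2\sqrt2$.

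Concretely, the steps I would carry out are: \textbf{(1)} Identify $\Id^{\mathbb{Z}_2}(L)$ well enough to pin down which graded identities force the relevant "upper bound" estimates; in particular show that substituting the semisimple part $I$ into the degree-$0$ slots and the solvable part $\langle u,v\rangle_L$ into the degree-$1$ slots is, up to lower-order corrections, the only way to get nonzero evaluations, so that the Shirshov-height / Zaicev-type combinatorics applies with "$d_0 = 3$" (from the simple algebra $\mathfrak{sl}_2$) and "$d_1 = 2$" contributions. \textbf{(2)} Prove the \emph{lower bound} $\liminf \sqrt[n]{c_n^{\mathbb{Z}_2}(L)} \ge 2+2\sqrt2$ by exhibiting an explicit family of multilinear graded polynomials that are not identities — this is the standard "sufficiently many alternating sets" construction: build polynomials alternating on many disjoint sets of degree-$0$ variables (evaluated in $\mathfrak{sl}_2 \cong (I,0)$, exploiting that $\mathfrak{sl}_2$ is simple so its own nonidentities survive) interleaved with alternating sets of degree-$1$ variables evaluated in $L_1$, then estimate the dimension of the corresponding $S_{n_0}\times S_{n_1}$-irreducible via the hook/Stirling asymptotics and optimize the ratio $n_0:n_1$. \textbf{(3)} Prove the matching \emph{upper bound} $\limsup \sqrt[n]{c_n^{\mathbb{Z}_2}(L)} \le 2+2\sqrt2$ by bounding the multiplicities and the number of partitions: show any Young diagram occurring in the $i$-th component of the cocharacter lies in a hook/strip of width controlled by $\dim L_i$ (here the crucial point being that $\langle u,v\rangle_L$ is solvable of a very restricted form, so long alternations in degree-$1$ variables beyond a bounded number collapse), and then maximize $\sqrt[n]{\binom{n}{n_0}\,\dim\lambda^{(0)}\,\dim\lambda^{(1)}}$ over the admissible region.

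The \textbf{main obstacle} I anticipate is step (3), the upper bound: one must control the degree-$1$ part, where $L_1$ is neither semisimple nor an ideal sitting "nicely", and the generalized (semigroup) action means the usual Hopf-algebraic tools from \cite{ASGordienko5} are unavailable in the form one would like. The delicate point is quantifying exactly how the solvable piece $\langle u,v\rangle_L$ can contribute: naively a $2$-dimensional solvable Lie algebra contributes exponent $1$, but here it is \emph{linked} to the simple part $\mathfrak{sl}_2$ through the diagonal embedding in $L_1$ and through the bracket $[(a,b),(c,d)]=([a,c],[b,d])$, so commutators in $L_1$ feed back into $L_0$. I would handle this by a careful analysis of which products $[\,l_{i_1}^{g_1},\dots,l_{i_n}^{g_n}\,]$ are nonzero, organizing the argument around the observation that once two degree-$1$ elements are bracketed the result lands in $(\,[\langle u,v\rangle_L,\langle u,v\rangle_L],[\langle u,v\rangle_L,\langle u,v\rangle_L]\,) \subseteq L_1$ is one-dimensional (spanned by $(v,v)$), sharply limiting alternation, and then feeding this into Zaicev's upper-bound scheme adapted to the graded multilinear setting to get the clean bound $(\sqrt3+\sqrt2)^2 = 5+2\sqrt6$ — which must then be reconciled with the claimed $2+2\sqrt2 = 5+\ldots$; pinning down the correct combinatorial optimum (and hence the exact constant) is precisely the heart of the proof.
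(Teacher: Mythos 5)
Your general framework --- cocharacter decomposition, polynomially bounded multiplicities, hook-formula asymptotics, and optimization of $\Phi(x_1,\dots,x_l)=\prod_i x_i^{-x_i}$ over the region of admissible partitions --- is the right one and matches the paper in outline. But the proposal has a genuine gap at exactly the point you yourself flag as ``the heart of the proof'': you never produce the mechanism that determines the admissible region, and your heuristics point to the wrong constant. The paper does \emph{not} run a multipartition $(\lambda^{(0)},\lambda^{(1)})$ optimization weighted by $\dim L_0=3$ and $\dim L_1=2$; neither $3+2=5$ nor $(\sqrt3+\sqrt2)^2=5+2\sqrt6$ is the answer, and note $2+2\sqrt2\approx 4.83$, not ``$5+\ldots$''. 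Instead it works with the full $S_n$-cocharacter of $V_n^{T\text{-}\mathrm{gr}}$ over ordinary partitions $\lambda\vdash n$ and proves a shape restriction via a weight function: setting $\theta(u,0)=\theta(u,u)=0$, $\theta(v,0)=\theta(v,v)=1$, $\theta(t,0)=-1$ on the basis $\{(u,0),(u,u),(v,0),(v,v),(t,0)\}$, every nonzero left-normed bracket of basis elements has total $\theta$-value in $[-1,1]$, while each column of $T_\lambda$ of height $5$ contributes exactly $+1$, each column of height $4$ at least $0$, and each remaining column at least $-1$. This forces $\lambda_6=0$ and $\lambda_4+\lambda_5\le\lambda_1+1$ whenever $m_\lambda^T(L)\neq 0$, so the optimization of $\Phi$ is over $\Omega=\{x_1\ge\cdots\ge x_5\ge 0,\ \sum_i x_i=1,\ x_4+x_5\le x_1\}$, whose maximum is $(5-3)+2\sqrt2=2+2\sqrt2$. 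Your observation that $[L_1,L_1]$ is one-dimensional is a germ of this restriction, but without the full weight argument (which crucially also uses the element $t$ of weight $-1$) you cannot cut the region down from the simplex, and the only upper bound you would obtain is $\dim L=5$.

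Two smaller issues. First, the polynomial bound on $\sum_{\lambda\vdash n} m_\lambda^T(L)$ is not automatic from a strip-width estimate; the paper derives it from a semigroup-graded version of Ado's theorem (a graded-compatible embedding of $L$ into a finite dimensional graded associative algebra, reducing to the known associative multiplicity bound). Second, for the lower bound one must exhibit, for every $\lambda$ with $\lambda_5>0$ and $\lambda_4+\lambda_5\le\lambda_1$, an explicit product of alternating graded polynomials together with a basis substitution that evaluates to a nonzero element; your sketch of alternating degree-$0$ sets interleaved with degree-$1$ sets is in the right spirit, but the actual construction must realize columns of height $4$ and $5$ that mix both homogeneous components, and its nonvanishing is verified by a concrete computation, not by quoting simplicity of $\mathfrak{sl}_2$ alone.
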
 

\begin{remark}\label{Het tegenvb is niet H-semisimple}
One verifies easily that $\mbox{Rad}(L)$, the solvable radical, equals $(0, \langle u ,v \rangle_L )$. Therefore, $L$ is not semisimple.  Moreover, since the only graded ideals of $L$ are $0, L$ and $(I,0)$, we see that $L$ also is not $(\mathbb{Z}_2,\cdot)$-semisimple (i.e.\ it is not the sum of graded-simple subalgebras).  Later on, in remark \ref{ss niet nodig in graded case}, we will note that if $L$ is graded-semisimple then the exponent is an integer. Finally, remark that $\mbox{Rad}(L)$ is not graded, which is an important difference with the group-graded case \cite[Prop. 3.3]{PagRepZai}. Actually it is this lack of graded structure theory that enables the current counterexample to the graded version of Amitsur's Conjecture.
\end{remark}

\noindent {\it Notation.} From now on, in order to avoid confusion with $(\mathbb{Z}_2,+)$-gradings, we denote $T = (\mathbb{Z}_2,\cdot)$.
 
\noindent The proof of Theorem \ref{th: irrational lie alg example} is a very natural one from an $S_n$-representation theory point of view. Namely consider $\frac{V_n^{S}(L)}{V_n^{S}(L) \cap \Id^{S}(L)}$
as $F S_n$-module, and decompose into a direct sum of Specht modules. Thus,
$$c_n^{T\text{-}\mathrm{gr}}(L) = \sum\limits_{\lambda \vdash n} m_{\lambda}^{T}(L) \dim_F S^F(\lambda),$$
where $m_{\lambda}^{T}(L)$ is the multiplicity of $S^F(\lambda)$. The proof now consists of the following three parts.

\textbf{(a)} First, we have to prove that the multiplicities $\sum_{\lambda \vdash n} m_{\lambda}^{T}(L)$ are bounded by a polynomial function.  This will be proven in Corollary \ref{bounded multiplicities} as a consequence of a graded version of Ado's Theorem \ref{graded ado theorem}. 

\textbf{(b)} Due to $(a)$, it is enough to estimate from above $\sum\limits_{\lambda \vdash n, m_\lambda \ne 0} \dim_F S^F(\lambda)$. Write $\lambda = (\lambda_1, \ldots, \lambda_l) \vdash n$ for a sufficiently large $n$. Then, using the Hook and Stirling formula one has that 
\begin{equation} \label{dimensie specht mod}
\begin{aligned}
\dim_F S^F(\lambda) & = \frac{n!}{\prod_{i,j} h_{\lambda}(i,j)} \leq \frac{n!}{\lambda_1! \cdots \lambda_l!}\\
& \simeq  \frac{\sqrt{2\pi}^{1-l} \sqrt{n} (\frac{n}{e})^n}{\sqrt{\lambda_1  \cdots  \lambda_l} (\frac{\lambda_1}{e})^{\lambda_1} \cdots (\frac{\lambda_q}{e})^{\lambda_l}}\\
 & =  \frac{\sqrt{2\pi}^{1-l} \sqrt{n}}{\sqrt{\lambda_1 \cdots  \lambda_l}} \left( \frac{1}{(\frac{\lambda_1}{n})^{\frac{\lambda_1}{n}} \cdots (\frac{\lambda_l}{n})^{\frac{\lambda_l}{n}}}\right)^n,
\end{aligned}
\end{equation}
for any partition $\lambda$ of $n$. Hence altogether we obtain that 
\begin{equation}\label{eq: de sup met de functie Lie example}
\limsup_{n\to \infty} \sqrt[n]{c_n^{T\text{-}\mathrm{gr}}(L))}
\leq \sup\limits_{\substack{\lambda \vdash n, \\ m_{\lambda}^{T}(L) \ne 0}} \Phi\left(\frac{\lambda_1}{n_1}, \ldots, \frac{\lambda_q}{n_q}\right).
\end{equation}
where $\Phi(x_1, \cdots, x_l) = \frac{1}{x_1^{x_1} \cdots  x_l^{x_l}}$ is a function on $\mathbb{R}^l$ that becomes continuous in the region $x_1, \cdots, x_l \geq 0$ if we define $0^0=1$. By restricting $\Phi$ to a region $\Omega$ having the property that ''if $\frac{\lambda}{n}=(\frac{\lambda_1}{n}, \ldots, \frac{\lambda_q}{n}) \notin \Omega $, then $m_{\lambda}^T(L) =0$'' we can lower the upper bound to $\limsup_{n\to \infty} \sqrt[n]{c_n^{T\text{-}\mathrm{gr}}(L))} \leq \max_{\vec{\alpha}\in\Omega} \Phi(\vec{\alpha})$. \Cref{region for the partitions} shows that if $ \lambda_6 > 0 \mbox{ and } \lambda_1 + 1 < \lambda_5 + \lambda_4$, then $m_{\lambda}^T(L) =0$. In particular we may take $$\Omega:=\left\{ (\alpha_1, \ldots, \alpha_5) \in \mathbb{R}^5 \mid \sum_{1 \leq i \leq 5} \alpha_i = 1,~ \alpha_1 \geq \ldots \geq \alpha_5 \geq 0,~ \alpha_{4} + \alpha_5 \leq \alpha_1 \right\}.$$
The value of $d$ is given in \Cref{waarde upper bound}.

\textbf{(c)} Since $c_n^{S}(L) \geqslant \dim M(\lambda)$ for all simple modules appearing in the decomposition, it is sufficient to find a partition $\mu = \mu_1 + \ldots + \mu_k$ such that $m(L, S, \mu) \neq 0$ and 
$$\dim_F S^F(\mu) \geqslant \frac{n!}{k^{k(k-1)}\mu_1! \ldots \mu_k!} \geqslant C n^{B} \left( \frac{1}{(\frac{\mu_1}{n})^{\frac{\mu_1}{n}}. \ldots .(\frac{\mu_k}{n})^{\frac{\mu_k}{n}}}\right)^n  \simeq C n^{B} d^n$$
for some constants $B, C \in \mathbb{R}$ in order to get the needed lower bound. In \Cref{alternating polynomial vb 1} we show that we can restrict $\Omega$ further to a region $\Omega_0$ such that $\max_{\Omega} \Phi = \max_{\Omega_0} \Phi$ and if $\frac{\lambda}{n} \in \Omega_0$ with $\lambda \vdash n$, then $m_{\lambda}^T(L) \neq 0$.
So, in this case, if $(\alpha_1, \ldots, \alpha_5)$ is an extremal point of $\Phi$ on $\Omega_0$, then the partition $\mu= (\mu_1, \ldots, \mu_k)$ with 
$$ \left\{ \begin{array}{ll}
\mu_i = \lfloor \alpha_{i}n \rfloor & \mbox{ for } 2 \leq i \leq k \\
\mu_1 = 1 - \sum_{i=2}^{k} \mu_i
\end{array}\right.$$
will have the right asymptotics and $m_{\mu}^T(L) \neq 0$, thus finishing the lower bound.

\subsection{Upper bound}\hspace{1cm}\vspace{0,1cm} \label{sectie Ado}

Recall that, by the Theorem of Ado, any finite dimensional Lie algebra has a finite dimensional faithful representation, i.e.\ there exists a Lie monomorphism $\rho: L \rightarrow \End_F(V)$ into the associated Lie algebra $\mathfrak{gl}_n(V) = \End_F(V)^{[-]}$, with $V$ a finite dimensional $F$-vector space. We prove that $A=\End_F(V)$ can be chosen such that a given gradation on $L$ 'is induced' from a gradation on $A$.

\begin{theorem}\label{graded ado theorem}
Let $L = \bigoplus_{t \in T} L^{(t)}$ be a finite dimensional Lie algebra graded by a finite abelian semigroup $T$. Then there exist a finite dimensional $T$-graded associative algebra $A= \bigoplus_{t \in T} A^{(t)}$ and a Lie monomorphism $\rho^{gr}: L \rightarrow A$ such that $\rho^{gr}(L^{(t)}) \subseteq A^{(t)}$ for all $t \in T$. 
\end{theorem}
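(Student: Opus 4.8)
The plan is to mimic the classical proof of Ado's theorem—via the Levi decomposition $L = S \ltimes R$ with $R = \mathrm{Rad}(L)$, the theorems of Zassenhaus/Harish-Chandra for the nilpotent radical, and Weyl's theorem for the semisimple part—while tracking the $T$-grading at every stage. The first step is to reduce to the case where $F$ is algebraically closed: if $\bar\rho^{gr}:L\otimes_F\bar F\to A$ is a graded faithful representation over $\bar F$, then restriction of scalars and a standard descent argument (the grading of $A$ restricts to a $\bar F$-form, and a faithful $F$-subrepresentation can be extracted) yields one over $F$; here one must check that the descent can be performed compatibly with the $T$-decomposition, which is routine since $T$ is finite and the grading is a direct-sum decomposition stable under the $\bar F/F$-action. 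I would then work over $\bar F$ and set up the induction on $\dim L$.

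The core is to produce a $T$-graded ideal of finite codimension inside the universal enveloping algebra that one can quotient by. Concretely: the free associative algebra $F\langle X\rangle$ on a graded basis $X$ of $L$ is $T$-graded (assign to a monomial the product in $T$ of the degrees of its letters—here finiteness and commutativity of $T$ are used so the grading is well defined and independent of ordering), hence $U(L)$ inherits a $T$-grading for which the canonical map $L\hookrightarrow U(L)$ is graded. The classical Ado argument constructs a finite-codimensional ideal $J\subseteq U(L)$ such that $L$ acts faithfully on $U(L)/J$; I would re-run that construction—first handling nilpotent $L$ via the lower central series and the ideal generated by a large power of the augmentation ideal, then the solvable case, then the general case by adjoining the Levi factor—checking at each step that the ideal $J$ produced can be taken to be $T$-homogeneous. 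For the nilpotent step this is clear because powers of the augmentation ideal are graded; for the solvable and Levi steps one uses that the relevant ideals (e.g. those coming from finite-dimensional modules over the semisimple part, or from Jordan-type decompositions of the radical action) are built from graded subspaces because $S$, $R$, $[L,L]$, and the generalized-eigenspace decompositions involved are all graded—$S$ and $R$ being graded is exactly the input one needs, and for a semisimple abelian-semigroup-graded Lie algebra the Levi complement can be chosen graded by a cohomological/averaging argument as in Gordienko's treatment of the group-graded case. Then $A := U(L)/J$ is the desired finite-dimensional $T$-graded associative algebra and $\rho^{gr}$ is the composite $L\hookrightarrow U(L)\twoheadrightarrow A$, which is graded and injective.

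The main obstacle is the Levi step: unlike in the group-graded situation, where $\mathrm{Rad}(L)$ is automatically graded, for a semigroup grading $\mathrm{Rad}(L)$ need not be homogeneous (indeed Remark~\ref{Het tegenvb is niet H-semisimple} exhibits exactly this phenomenon for our example $L$). So one cannot simply invoke a graded Levi decomposition. I expect to handle this by a different route: rather than splitting off $\mathrm{Rad}(L)$, decompose $L$ as a $T$-graded module over itself and use that a faithful representation of $L$ can always be assembled from faithful representations of $L/\mathrm{Ann}$ on graded pieces, building the ideal $J$ directly inside the graded algebra $U(L)$ without ever needing $\mathrm{Rad}(L)$ to be homogeneous; the price is that one must argue more carefully that a large-enough power of a suitable graded ideal of $U(L)$ both has finite codimension and has trivial intersection with (the image of) $L$, which ultimately rests on the fact that $L$ itself is finite dimensional and $T$ is finite, so only finitely many graded components are involved. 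A secondary, more technical point is verifying that the $T$-grading on $U(L)$ is honest—i.e. that $U(L)^{(s)}U(L)^{(t)}\subseteq U(L)^{(st)}$ descends correctly through the PBW relations—but since those relations $xy-yx-[x,y]$ are homogeneous (using commutativity of $T$), this is immediate.
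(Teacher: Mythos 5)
Your plan is to re-run the classical proof of Ado's theorem (Levi decomposition, nilradical, enveloping-algebra ideal $J$) while tracking the $T$-grading, and you yourself identify the point where this breaks: for a semigroup grading $\mathrm{Rad}(L)$ need not be homogeneous, so there is no graded Levi decomposition to induct on. The workaround you sketch --- ``decompose $L$ as a $T$-graded module over itself'' and ``build the ideal $J$ directly inside $U(L)$ without ever needing $\mathrm{Rad}(L)$ to be homogeneous'' --- is not an argument but a restatement of the difficulty. The classical construction of a finite-codimensional ideal $J \subseteq U(L)$ on which $L$ acts faithfully is not a soft finiteness statement; it genuinely uses the Levi factor and the nilpotency of the nilradical (Zassenhaus extension, Harish-Chandra's argument), and there is no known way to produce $J$ ``directly'' from finite-dimensionality of $L$ and finiteness of $T$ alone. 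Since the acting subalgebras you would need to be homogeneous ($\mathrm{Rad}(L)$, the nilradical, a Levi complement) are precisely the ones that fail to be homogeneous in the semigroup setting --- the paper's own counterexample $L$ has non-graded radical --- your induction never gets off the ground. So as written the proposal has a genuine gap at its central step.

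The point you are missing is that no graded structure theory of $L$ is needed at all: one can use the ungraded Ado theorem as a black box and then \emph{manufacture} the grading on the representation space. That is what the paper does. Take any faithful $\rho: L \to \End_F(V)$, set $V^T = \bigoplus_{t \in T} V^{(t)}$ with each $V^{(t)}$ a copy of $V$ via a fixed isomorphism $\psi_t: V \to V^{(t)}$, and let a homogeneous element $l^{(t)} \in L^{(t)}$ act by sending $V^{(s)}$ to $V^{(st)}$ as $\psi_{st} \circ \rho(l^{(t)}) \circ \psi_s^{-1}$. This lands in the degree-$t$ component of (the outer direct sum of) $\End_F(V^T)$, is faithful because $\rho$ is, and is a Lie map because $T$ is abelian (so the two orders of composition shift $V^{(s)}$ to the same component $V^{(stt')}$). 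This two-line ``regular representation twist'' replaces your entire inductive programme, and also makes your preliminary reduction to an algebraically closed field unnecessary.
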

\begin{proof}
As mentioned there exists a finite dimensional faithful Lie-representation $\rho: L \rightarrow \End_{F}(V)$. Further fix a vector space isomorphism  $\psi_t : V \rightarrow V^{(t)}$ for each $t \in T$ and define the finite dimensional $T$-graded vector space $V^{T} = \bigoplus_{t \in T} V^{(t)}$. Also denote $\End_{F}(V^{T})^{(t)} = \left\{ f \in \End_F(V^{T}) \mid f(V^{(s)}) \subseteq V^{(st)} \mbox{ for all } s \in T \right\}$ for all $t \in T$. The desired monomorphism is
$$\rho^{gr}:L \longrightarrow \bigoplus_{t \in T} \End_{F}(V^{T})^{(t)},$$
a map from $L$ to the outer direct sum $\bigoplus_{t \in T} \End_{F}(V^{T})^{(t)}$ that sends an arbitrary homogeneous element $l^{(t)} \in L^{(t)}$ to the linear map $\rho^{gr}(l^{(t)}): \bigoplus\limits_{s \in T} V^{(s)} \rightarrow \bigoplus\limits_{s \in T} V^{(s)}$ defined by the commutative diagram

\begin{displaymath}
\xymatrix{
V^{(s)} \ar[rr]^{\rho^{gr}(l^{(t)})} \ar[d]_{\psi_s}& & V^{(st)} \\
V \ar[rr]_{\rho(l^{(t)})}& & V \ar[u]_{(\psi_{st})^{-1}} \\
}
\end{displaymath}

One easily checks that $\rho^{gr}$ inherits from $\rho$ the faithfulness and property to be a Lie map. Clearly $\rho^{gr}$ satisfies the extra property $\rho^{gr}(L^{(t)}) \subseteq A^{(t)}$ where $A = \bigoplus\limits_{t \in T} A^{(t)} = \bigoplus\limits_{t \in T} \End_{F}(V^{T})^{(t)}$.\end{proof}

For a grading by the group $(\mathbb{Z},+)$ the theorem above was proven in \cite{Ros}. 

\begin{remark}
\begin{enumerate}
\item[(i)] In general $\End (W) \neq \bigoplus_{t \in T} \End(W)^{(t)}$ for a $T$-graded vector space $W$. This is the reason why we use the outer direct sum $\bigoplus_{t \in T} \End_{F}(W)^{(t)}$ in the proof of \Cref{graded ado theorem}. 
\item[(ii)] If $S$ is abelian, then the gradation of $A$ induces also a gradation on $A^{[-]}$. Moreover in this case $\rho$ is a graded lie morphism, i.e $\rho(L^{(t)}) \subseteq (A^{[-]})^{(t)}$ for all $t \in T$. 
\end{enumerate}
\end{remark}

As a direct consequence we get now that the multiplicities $\sum_{\lambda \vdash n} m_{\lambda}^T(L)$ are polynomially bounded.

\begin{corollary} \label{bounded multiplicities}
Let $L$ be a $T$-graded Lie algebra for some finite abelian semigroup $T$. Then there exist constants $C, d \in \mathbb{N}$ such that $\sum_{\lambda \vdash n} m_{\lambda}^T(L) \leq C n^d$ for all $n \in \mathbb{N}$.
\end{corollary}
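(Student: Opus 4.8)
The plan is to reduce the statement to a known fact about ordinary (ungraded) codimension growth via the graded Ado embedding just established. First I would invoke \Cref{graded ado theorem} to obtain a finite dimensional $T$-graded associative algebra $A = \bigoplus_{t\in T} A^{(t)}$ together with a Lie monomorphism $\rho^{gr}: L \hookrightarrow A^{[-]}$ respecting the $T$-grading, so that without loss of generality we may view $L$ as a $T$-graded Lie subalgebra of $A^{[-]}$. Because the grading semigroup $T$ is finite, a standard averaging/Peirce argument (see \cite[Section 1.1.]{ASGordienko5}) shows that the $T$-graded polynomial identities of $L$ are controlled by the ordinary identities of $L$ together with a bookkeeping of the $|T|$ possible degrees; concretely, the multilinear $T$-graded component $V_n^{T\text{-}\mathrm{gr}}(L)/(V_n^{T\text{-}\mathrm{gr}}(L)\cap \Id^{T}(L))$ embeds, as an $FS_n$-module, into a direct sum of at most $|T|^n$ copies of the ordinary multilinear component $V_n(B)/(V_n(B)\cap \Id(B))$ for a suitable finite dimensional associative $B$ built from $A$ (for instance $B = A$ itself, or a matrix algebra containing it).

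The key point is then the following: by a classical theorem of Berele--Regev / Giambruno--Zaicev on the multiplicities in the cocharacter sequence of a finite dimensional (PI) associative algebra, the total multiplicity $\sum_{\lambda\vdash n} m_\lambda(B)$ of the ordinary cocharacter of $B$ is bounded by a polynomial in $n$ (see \cite[Theorem 12.3.11 and its surrounding results]{GiaZaibook}). Passing back to $L$ through the embedding $\rho^{gr}$: since $\Id^{T}(A^{[-]}) \subseteq \Id^{T}(L)$ as $L \subseteq A^{[-]}$ is a graded subalgebra... wait — one must be careful here, as a priori $\Id^T(L) \supseteq \Id^T(A^{[-]})$ would give the wrong inclusion. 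The correct route is that $L \hookrightarrow A^{[-]}$ graded gives a surjection $V_n^{T}(A^{[-]})/(\cdots) \twoheadrightarrow V_n^{T}(L)/(\cdots)$, hence $m_\lambda^T(L) \leq m_\lambda^T(A^{[-]})$ for every $\lambda$, and the total graded multiplicity of $A^{[-]}$ is bounded by the total graded multiplicity of the associative algebra $A$, which in turn is polynomially bounded by the cited associative results applied degree-by-degree over the finite set $T$.

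The main obstacle I expect is not the polynomial bound itself — that is classical on the associative side — but rather the careful transfer of the multiplicity inequality $m_\lambda^{T}(L) \le m_\lambda^{T}(A)$ through the graded Lie embedding, keeping the $S_n$-module structure and the $T$-grading compatible simultaneously. One must check that the graded multilinear space of $L$ is genuinely a quotient $FS_n$-module of that of $A^{[-]}$ (which it is, since $\Id^T(A^{[-]}) \subseteq \Id^T(L)$ because every graded Lie identity of $A^{[-]}$ holds in the graded subalgebra $L$), and that the finitely many graded components do not interfere. Once this is in place, the constants $C, d \in \mathbb{N}$ come directly from the associative estimate, and the finiteness of $T$ only changes $C$ and $d$ by bounded amounts (the $|T|^n$-type factors are absorbed because, for each fixed multidegree in $T^n$ up to $S_n$-symmetry, the relevant associative multiplicity is the one that is polynomially bounded, and there are only boundedly many $S_n$-orbits of such multidegrees contributing — or, more crudely, one notes that $c_n^{T\text{-}\mathrm{gr}}(L) \le |T|^n c_n(L)$ is not enough, so one really does need the module-theoretic argument rather than a dimension count). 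This completes the proof.
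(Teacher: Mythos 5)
Your outline follows the paper's proof almost exactly: graded Ado gives a graded Lie embedding $L\hookrightarrow A^{[-]}$ into a finite dimensional $T$-graded associative algebra, the resulting surjection of $FS_n$-modules gives $m_\lambda^{T}(L)\le m_\lambda^{T}(A^{[-]})$ for each $\lambda$ (your self-correction on the direction of the inclusion of $T$-ideals is the right one), one then passes from the Lie to the associative multiplicities of $A$, and finally one invokes a polynomial bound on the total associative graded multiplicity. Two points deserve tightening. First, the Lie-to-associative comparison $\sum_\lambda m_\lambda^{T}(A^{[-]})\le\sum_\lambda m_\lambda^{T}(A)$ is not automatic; the paper justifies it by observing that $V_n^{T\text{-}\mathrm{gr}}(F)$ is an $FS_n$-submodule of $P_n^{T\text{-}\mathrm{gr}}(F)$ and that $V_n^{T\text{-}\mathrm{gr}}(F)\cap\Id^{T\text{-}\mathrm{gr}}(A^{[-]})=V_n^{T\text{-}\mathrm{gr}}(F)\cap\Id^{T\text{-}\mathrm{gr}}(A)$, so the Lie quotient sits inside the associative one and complete reducibility gives the multiplicity inequality; you should state this rather than assert the bound. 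Second, and more importantly, your final step is the soft spot: the paper does not re-derive the graded polynomial colength bound from the ungraded Berele--Regev/Giambruno--Zaicev result via a ``degree-by-degree over $T$'' reduction, but simply cites Gordienko's theorem (\cite[Theorem 5]{Gor}) that $\sum_\lambda m_\lambda^{T}(A)\le Cn^d$ for a finite dimensional associative algebra with such a structure. Your sketch of that reduction is not yet a proof --- in particular the claim that only ``boundedly many'' multidegrees contribute is false (there are $\binom{n+|T|-1}{|T|-1}$ of them, polynomially many, which still suffices but requires the branching/Littlewood--Richardson bookkeeping you allude to), and you yourself concede that the crude $|T|^n$ estimate is useless here. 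Citing the associative graded colength theorem directly closes this gap cleanly.
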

\begin{proof}
By \Cref{graded ado theorem} there exists a finite dimensionsal associative algebra $A$ and a Lie monomorphism $\rho: L \rightarrow A^{[-]}$ such that $\rho(L^{(t)}) \subseteq A^{(t)}$ for $t \in T$. In particular 
$\sum_{\lambda \vdash n} m_{\lambda}^T(L) \leq \sum_{\lambda \vdash n} m_{\lambda}^T(A^{[-]})$, where $\frac{V_n^{T\text{-}\mathrm{gr}}(F)}{V_n^{T\text{-}\mathrm{gr}}(F)\cap \Id^{T\text{-}\mathrm{gr}}(A^{[-]})} = \bigoplus_{\lambda \vdash n} m_{\lambda}^{T}(A^{[-]}) S^F(\lambda)$.
Let $m_{\lambda}^T(A)$ be the multiplicity of $S^F(\lambda)$ in $\frac{P_n^{T\text{-}\mathrm{gr}}(F)}{P_n^{T\text{-}\mathrm{gr}}(F) \cap \Id^{T\text{-}\mathrm{gr}}(A)}$. Note that 
$$\sum_{\lambda \vdash n} m_{\lambda}^T(A^{[-]}) \leq \sum_{\lambda \vdash n} m_{\lambda}^T(A)$$
 since $V_n^{T\text{-}\mathrm{gr}}(F)$ is an $FS_{n}$-submodule of $P_{n}^{T\text{-}\mathrm{gr}}(F)$ and  $V_n^{T\text{-}\mathrm{gr}}(F) \cap \Id^{T\text{-}\mathrm{gr}}(A^{[-]}) = V_n^{T\text{-}\mathrm{gr}}(F) \cap \Id^{T\text{-}\mathrm{gr}}(A)$. Now, by \cite[Theorem 5]{Gor}, there exist constants $C, d \in \mathbb{N}$ such that $\sum\limits_{\lambda \vdash n} m_{\lambda}^T(A) \leq C n^d$. This finishes the proof. 
\end{proof}

\Cref{bounded multiplicities} can also be proven without making use of \Cref{graded ado theorem}. Actually one can rewrite word by word the proof of \cite[Theorem 12]{ASGordienko5} for $H= (FT)^*$. However this is lengthier.

Due to the strategy explained before, the upper bound will be a direct consequence of the following proposition.

\begin{proposition}\label{region for the partitions}
Let $L = \mathfrak{sl}_2(\mathbb{C}) \oplus \langle u,v \rangle_L$ be the $T=(\mathbb{Z}_2,.)$-graded Lie algebra defined at the beginning of \Cref{frac expo sectie}. Assume $m_{\lambda}^T(L) \neq 0$ for some partition $\lambda \vdash n$. Then $\lambda_6 = 0$ and $\lambda_1 + 1 \geq \lambda_5 + \lambda_4$.
\end{proposition}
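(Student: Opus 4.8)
The strategy is the standard one for obtaining vanishing of $S_n$-multiplicities from the structure of a (graded) Lie algebra: a nonzero multiplicity $m_\lambda^T(L)\neq 0$ means there is a multilinear graded polynomial $f$, not a graded identity of $L$, lying in the isotypic component of $S^F(\lambda)$; by acting with a suitable Young symmetrizer we may assume $f$ is alternating in several disjoint sets of variables, each set of size $\lambda_1'$ (the first column length), and then we must produce, from the grading and the concrete Lie structure of $L = \mathfrak{sl}_2(\mathbb{C})\oplus\langle u,v\rangle_L$, a reason why too many alternations force $f$ to vanish on $L$. Concretely, $\lambda_6=0$ should come from the fact that the homogeneous components $L_0$ and $L_1$ have dimensions $3$ and (at most) $2$, so a polynomial alternating in six variables all evaluated in a single homogeneous component vanishes; the subtler inequality $\lambda_1+1\geq \lambda_4+\lambda_5$ should come from analysing how alternations interact with the solvable radical $\Rad(L)=(0,\langle u,v\rangle_L)$, which is abelian and $2$-dimensional but is \emph{not} graded — this is exactly the phenomenon flagged in \Cref{Het tegenvb is niet H-semisimple}.

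In detail, first I would recall that $f$ may be taken to alternate on $t=\lambda_1'$ disjoint sets $X_1,\dots,X_{?}$ — more precisely, since $\dim_F S^F(\lambda)$ and the whole argument only care about the first few rows, I would use the dual description: $m_\lambda^T(L)\neq 0$ forces the existence of a multilinear polynomial essentially alternating in sets whose sizes are the column lengths $\lambda_1'\geq\lambda_2'\geq\cdots$, while the bound on the number of rows with a given length translates into the inequalities on $\lambda_i$ themselves. Because each variable carries a homogeneous degree in $T=\{0,1\}$, in any alternating set the number of variables specialised into $L_0$ is at most $\dim L_0=3$ and into $L_1$ is at most $\dim L_1\leq 2$; hence an alternating set of size $\geq 6$ is impossible, giving $\lambda_1'\leq 5$, i.e.\ $\lambda_6=0$. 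This is the easy half.

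For the inequality $\lambda_1+1\geq\lambda_4+\lambda_5$, I would argue contrapositively: suppose $\lambda_5>0$ and $\lambda_1+1<\lambda_4+\lambda_5$, and derive $m_\lambda^T(L)=0$. The point is that $\lambda_5>0$ means there are at least five rows, so (dually) there are at least five alternating sets — but then, evaluating $f$ and expanding each bracket using the grading compatibility and the decomposition $L=I\oplus\Rad(L)$, in each alternating set the $L_1$-slots, being at most $2$-dimensional and landing in the abelian radical direction $\langle u,v\rangle_L$, can absorb at most two entries before the corresponding contribution dies; the remaining "long" alternations of total length controlled by $\lambda_4+\lambda_5$ must then be squeezed into the semisimple part $I\cong\mathfrak{sl}_2$ together with the degree-$0$ homogeneous part, and a counting of how many such entries $\mathfrak{sl}_2$ can tolerate against $\lambda_1$-many symmetric "free" slots yields the bound $\lambda_1+1\geq\lambda_4+\lambda_5$. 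The cleanest route is probably to combine the known group-graded/ungraded estimate for $\mathfrak{sl}_2$ (where alternating on $\geq 4$ sets already restricts the shape, cf.\ the classical computation of $\exp(\mathfrak{sl}_2)=3$ and its graded refinements) with a separate bookkeeping for the at-most-two-dimensional ungraded radical, using that $[\Rad(L),\Rad(L)]=0$ and that brackets of a radical element with an $L_1$-element stay inside $L_1$.

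The main obstacle I expect is precisely this interaction between the non-graded radical and the grading: because $\Rad(L)$ is not homogeneous, one cannot simply split a multilinear graded polynomial into "semisimple part plus radical part" compatibly with the $T$-grading, so the argument must carefully track, for each homogeneous specialisation of the variables, the image of the evaluation in the (non-graded) filtration $0\subset\Rad(L)\subset L$. Making the counting tight enough to land on $\lambda_1+1\geq\lambda_4+\lambda_5$ — rather than a weaker inequality — will require choosing the alternating sets optimally and exploiting that $L_1=\{(a,a):a\in\langle u,v\rangle_L\}$ is a \emph{diagonal} copy, so that projecting an $L_1$-entry to the first summand $I$ and to the radical summand are not independent. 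I would set this up as a lemma about evaluations of alternating multilinear elements on $L$ and then read off both claimed inequalities from it.
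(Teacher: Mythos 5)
Your dimension count for $\lambda_6=0$ is essentially correct (it is just $\dim_F L=5<6$: a set of six alternating variables evaluated on the five basis elements must repeat a value). But for the essential inequality $\lambda_1+1\geq\lambda_4+\lambda_5$ your plan contains no workable argument. The mechanism you propose --- tracking the filtration $0\subset\mbox{Rad}(L)\subset L$, the abelianness of the radical, and the diagonal embedding of $L_1$ --- is not what produces this bound (and note that $L_1=\{(a,a)\}$ does not even lie in $\mbox{Rad}(L)=(0,\langle u,v\rangle_L)$); the ``counting of how many such entries $\mathfrak{sl}_2$ can tolerate'' is never actually specified. The paper's proof rests on a different and very concrete device: a weight function coming from the root decomposition of $\mathfrak{sl}_2$ with respect to $u$. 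Setting $\theta(u,0)=\theta(u,u)=0$, $\theta(v,0)=\theta(v,v)=1$, $\theta(t,0)=-1$ (the $\ad_u$-eigenvalue divided by $-2$), one checks that $\theta$ is additive on nonzero left-normed brackets of basis elements, while every nonzero element of $L$ has $\theta$-value in $\{-1,0,1\}$; hence any nonvanishing evaluation $[b_1,\dots,b_m]\neq 0$ satisfies $-1\leq\sum_i\theta(b_i)\leq 1$.

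Given this invariant, the inequality drops out of a column-by-column count on the tableau: each of the first $\lambda_5$ columns (height $5$) must be evaluated on all five basis elements and so contributes $\theta$-value exactly $1$; each column of height $4$ contributes at least $0$; and each of the remaining $\lambda_1-\lambda_4$ columns contributes at least $-1$. The total is therefore at least $\lambda_5-(\lambda_1-\lambda_4)$, and the constraint that it not exceed $1$ gives $\lambda_1-\lambda_4\geq\lambda_5-1$, as claimed. Without some such additive quantity that is bounded on $L$ but accumulates across the tall columns, I do not see how your radical-filtration bookkeeping would land on the precise bound $\lambda_1+1\geq\lambda_4+\lambda_5$; in particular $[\mbox{Rad}(L),\mbox{Rad}(L)]=0$ plays no role in the paper's argument for this proposition. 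This is a genuine gap you would need to fill, most naturally by discovering (or being told) the function $\theta$.
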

\noindent {\bf Recurrent convention.}
Denote the $F$-basis of $L$ by $$\mathcal{B}_L = \{ (u,0),(u,u),(v,0),(v,v),(t,0) \}.$$ In the sequel we will always assume that the evaluations are from elements in $\mathcal{B}_L$.

\begin{proof}
Since $m_{\lambda}^T(L) \neq 0$ there exists a multilinear polynomial $f \in V_n^{T\text{-}\mathrm{gr}}(F)$ such that $e_\lambda f \notin \Id^{T\text{-}\mathrm{gr}}(L)$. 

\noindent Recall that $e^{*}_{\lambda} = \sum\limits_{\substack{\sigma \in R_{\lambda} \\ \tau \in C_{\lambda}}} \mbox{sgn}(\tau)~ \tau \circ \sigma$. Thus $e^{*}_\lambda f$ is alternating in the sets of variables corresponding to the numbers of each column of $T_{\lambda}$ and symmetric in those corresponding to the rows of $T_\lambda$. Thus, since $\dim_F L =5$ and $m_{\lambda}^T(L) \neq 0$, we must have that $\lambda_6 =0$ which we assume for the sequel of the proof.

Now define the function $\theta : L \rightarrow  \mathbb{Z}$ first on the basis elements by
$$\theta(u,u) = \theta(u,0)=0, \qquad \theta(v,v) = \theta(v,0) =1,~~ \text{ and } ~~ \theta(t,0)= -1.$$
and on an arbitrary element we take the maximum. Suppose $[b_1, \ldots, b_m] \neq 0$ for some basis elements $b_i \in \mathcal{B}_L$. One easily proves that 
$$-1 \leq \sum\limits_{1 \leq i \leq m} \theta(b_i) = \theta ([b_1, \ldots, b_m]) \leq 1.$$
Also $\sum\limits_{b \in \mathcal{B}_L} \theta(b) =1$ and $ \sum\limits_{b \in \mathcal{B}_L \setminus \{d \}} \theta (b) \geq 0$ for any $d \in \mathcal{B}_L$. Since $e^{*}_{\lambda} f \notin \Id^{T\text{-}\mathrm{gr}}(L)$ there exist some basis elements $b_1, \ldots, b_m \in \mathcal{B}_L$ such that $[b_1, \ldots, b_m] \neq 0$. By the previous inequalities we know that the $\lambda_4$ first columns of $T_\lambda$ give an altogether $\theta$-value of at least $\lambda_5$. Since the total $\theta$-value of $[b_1, \ldots, b_m]$ does not exceed $1$, there must remain at least $\lambda_5-1$ columns. Since the number of remaining columns is equal to $\lambda_1 - \lambda_4$ we get that $\lambda_1 - \lambda_4 \geq \lambda_5-1$ as desired.
\end{proof}
\begin{remark}
By interchanging the $\theta$-values of $u$ and $t$ one can prove analogously the above result for $L = \mathfrak{sl}_2(\mathbb{C}) \oplus \langle u,t \rangle_L $.
\end{remark}

As explained in the overview of the proof we have to compute the maximum of $\Phi(x_1, \ldots, x_q) = \frac{1}{x_1^{x_1} \ldots x_q^{x_q}}$ on the region
\begin{equation}\label{de regio bij Lie alg example}
\Omega =\left\{ (x_1, \ldots, x_q) \in \mathbb{R}^q \mid \sum_{1 \leq i \leq q} x_i = 1,~ x_1 \geq \ldots \geq x_q \geq 0,~ x_{q-1} + x_q \leq x_1 \right\}
\end{equation}
for $q=5$. This was already done in \cite[Lemma 3]{ASGordienko13}.

\begin{lemma} \label{waarde upper bound}
Let $q \in \mathbb{N}_{\geq 4}$. Then $\max_{\vec{x} \in \Omega} \Phi(\vec{x}) = (q-3) + 2 \sqrt{2} \approx q - 0.1716\ldots$
\end{lemma}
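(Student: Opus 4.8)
The plan is to maximize $\Phi(\vec{x}) = \prod_i x_i^{-x_i}$ on $\Omega$ by first observing that on the interior, where the active constraints are only the simplex condition $\sum x_i = 1$, the gradient condition for a critical point is $\log x_i + 1 = \mu$ for all $i$, forcing all $x_i$ equal; but the point $(1/q,\dots,1/q)$ violates $x_{q-1}+x_q \le x_1$ for $q \ge 3$, so no interior maximum exists and the optimum lies on the boundary. So I would argue that at a maximizer the constraint $x_{q-1}+x_q \le x_1$ is active (equality), and moreover the chain $x_1 \ge x_2 \ge \dots$ collapses as much as possible: I expect the extremal configuration to have $x_2 = x_3 = \dots = x_{q-1}$ (a common value, call it $b$) with $x_q = 0$ allowed or not, and $x_1$ determined by $x_1 = x_{q-1} + x_q$. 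The natural guess, matching the claimed value $(q-3) + 2\sqrt 2$, is that $x_q = 0$, that $x_4, \dots, x_{q-1}$ each take some value making $\Phi$ contribute a factor of $1$ "per unit" — more precisely the $q-4$ free middle coordinates should each be $1/e$-like... wait, let me reconsider: since $\sum x_i = 1$ and we want $\Phi = \prod x_i^{-x_i}$, writing $x_i = n_i/n$ heuristically the exponential rate is $\sum n_i \log(n/n_i)$, and a coordinate contributing a "full dimension" corresponds to $x_i \to 0$ with $x_i^{-x_i} \to 1$ but many such coordinates. The clean way: reduce to the two-variable problem. Set $x_1 = a$, $x_q = c$, and lump $x_2 = \dots = x_{q-1}$ isn't forced to be equal to anything specific — rather I suspect the true extremizer puts mass only on three coordinates in an essential way: $x_1$, one other, and the rest. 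Let me instead follow the structure that $(q-3)$ coordinates are "spread thin" and the remaining mass concentrates.

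Concretely, I would reduce as follows. By Lagrange/convexity arguments, at the maximum the coordinates $x_2, \dots, x_{q-1}$ that are strictly between $0$ and $x_1$ must all be equal (the function $t \mapsto -t\log t$ is strictly concave, so among coordinates not pinned at a bound, equalizing increases $\Phi$ unless they are already equal). So the maximizer has the form $x_1 = a \ge x_2 = \dots = x_{p} = b \ge x_{p+1} = \dots = x_{q} = 0$ for some $p$, possibly with $a = b$ or with the constraint $x_{q-1} + x_q \le x_1$ forcing relations. Given $q - p$ zeros contribute nothing to $\sum x_i$ or to $\Phi$, and given the constraint reads $2b \le a$ if $p = q$ (two lowest nonzero), or $b \le a$ if there are zeros at the bottom. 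Then I plug into $\Phi$: with one coordinate $a$, $(p-1)$ coordinates equal to $b$, and $a + (p-1)b = 1$, I get $\log \Phi = -a\log a - (p-1) b \log b$. Optimize over $a, b$ with the linear constraint and the inequality $2b \le a$ (resp. $b \le a$), and over the integer $p$. The boundary case $2b = a$ (or $b = a$) is where I expect the max; substituting and doing single-variable calculus should yield the value. The $2\sqrt 2$ strongly suggests the binding case is $a = 2b$ with the two smallest nonzero coordinates both equal to $b$ and summing to $a$ — i.e. $p = q$, no zeros — so that $a + (q-1)b = 1$ with $a = 2b$ gives $b = 1/(q+1)$, but then $\log\Phi = -2b\log 2b - (q-1)b\log b$; this does not obviously give $(q-3) + 2\sqrt 2$, so the correct configuration must instead have $q - 3$ coordinates at the bottom equal to each other at a small positive value tending to make each factor $\to 1$ while there are $q-3$ of them contributing the "$q-3$" term, with three distinguished coordinates carrying the $2\sqrt 2$; I would work out that the three active coordinates solve a $2\times 2$ system whose critical value is $2\sqrt 2$.

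After pinning the shape, the remaining work is a finite calculation: differentiate $\log\Phi$ along the one- or two-parameter family, find the critical point, check it lies in $\Omega$ and is a maximum (second-derivative or concavity of $\log\Phi$), and verify the boundary/integer cases $p$ give nothing larger. Since \cite[Lemma 3]{ASGordienko13} already records this computation, I would either cite it directly — which the excerpt already does — or reproduce the short optimization. The approximation $q - 0.1716\ldots$ follows since $2\sqrt 2 - 3 = -0.1715\ldots$, a consistency check on the arithmetic.

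\textbf{Main obstacle.} The genuinely delicate point is justifying that the maximizer has the claimed combinatorial shape — in particular, that exactly the right number of coordinates are pushed to $0$, that the interior ones are all equal, and that the constraint $x_{q-1} + x_q \le x_1$ is active. Concavity of $x \mapsto -x\log x$ handles the "equalize the free coordinates" step cleanly, but ruling out intermediate configurations (some coordinates at $0$, some equal, with the inequality slack) requires carefully checking the KKT conditions: a free coordinate forces $-\log x_i - 1 = \mu$ (common), so all free coordinates equal a fixed value $1/e \cdot e^{-\mu}$; a coordinate at $0$ requires the gradient to point "outward," i.e. $-\log 0 - 1 = +\infty \ge \mu$, always satisfied, so zeros are never excluded by KKT alone and one must compare finitely many candidate values of $p$ directly. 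That bookkeeping — enumerating the finitely many stationary shapes and selecting the largest — is where the real care is needed; everything else is routine single-variable optimization.
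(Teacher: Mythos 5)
The paper itself does not prove this lemma; it simply cites \cite[Lemma 3]{ASGordienko13}, so your instinct to either cite the reference or reproduce the optimization is reasonable. But your reconstruction of the optimization contains two concrete errors that would derail it. First, the claim that all coordinates strictly between $0$ and $x_1$ must be equal at the maximizer is false: $x_{q-1}$ and $x_q$ enter the active constraint $x_{q-1}+x_q\le x_1$, so their KKT stationarity condition carries an extra multiplier and they need not (and do not) agree with the unconstrained middle coordinates; averaging $x_2,\dots,x_q$ to a common value would in fact push $x_{q-1}+x_q$ above $x_1$ and leave the feasible region. Second, your final guess that $q-3$ coordinates are ``spread thin'' near $0$ while three distinguished coordinates carry the $2\sqrt2$ is also wrong: sending $q-3$ coordinates to $0$ reduces the problem to three variables, where the constrained maximum is exactly $2\sqrt2$ (attained at $(1/2,1/4,1/4)$), far below $(q-3)+2\sqrt2$.

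The actual maximizer has all $q$ coordinates positive and of comparable size, taking three distinct values. Writing $m=q-3$ and imposing the active constraint $x_{q-1}+x_q=x_1$ together with $x_{q-1}=x_q$ and $x_2=\dots=x_{q-2}=b$, Lagrange stationarity forces $x_1=\sqrt2\,b$ and $x_{q-1}=x_q=b/\sqrt2$, and then $\sum_i x_i=1$ gives $b=1/(m+2\sqrt2)$. Since $\log x_1=\log b+\tfrac12\log2$ and $\log x_{q-1}=\log x_q=\log b-\tfrac12\log2$, one gets
$$\log\Phi=-\sum_i x_i\log x_i=-\Bigl(\sum_i x_i\Bigr)\log b-\tfrac12(\log 2)\,(x_1-x_{q-1}-x_q)=-\log b,$$
so $\Phi=1/b=(q-3)+2\sqrt2$. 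To turn this into a proof one must still check that no other stationary configuration (some coordinates at $0$, ties with $x_1$, or the constraint slack) does better; that enumeration is finite, as you say, but your proposal as written singles out the wrong configuration, so the gap is substantive rather than cosmetic.
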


\begin{corollary} \label{upper bound frac vb 1}
$\limsup_{n\to \infty} \sqrt[n]{c_n^{T\text{-}\mathrm{gr}}(L)} \leq 2 + 2 \sqrt{2}$.
\end{corollary}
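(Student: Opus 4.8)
The plan is to derive \Cref{upper bound frac vb 1} as an immediate consequence of the two preceding results, so the "proof" here is really just an assembly of the machinery already in place. First I would recall the $S_n$-module decomposition of the relatively free graded algebra in degree $n$, namely
\[
c_n^{T\text{-}\mathrm{gr}}(L) = \sum_{\lambda \vdash n} m_\lambda^T(L)\,\dim_F S^F(\lambda),
\]
and split the sum into the terms with $m_\lambda^T(L)=0$ (which contribute nothing) and those with $m_\lambda^T(L)\neq 0$. For the latter, \Cref{region for the partitions} forces $\lambda_6=0$ and $\lambda_1+1\geq \lambda_4+\lambda_5$, so only partitions into at most $5$ parts, lying (after normalization) in the region $\Omega$ of \eqref{de regio bij Lie alg example} with $q=5$, actually occur.

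Next I would invoke \Cref{bounded multiplicities}: there are constants $C,d\in\mathbb N$ with $\sum_{\lambda\vdash n} m_\lambda^T(L)\leq Cn^d$. Combining this with the Hook/Stirling estimate \eqref{dimensie specht mod}, which bounds each $\dim_F S^F(\lambda)$ by roughly $\sqrt{2\pi}^{\,1-l}\sqrt{n}\,\bigl(\Phi(\lambda/n)\bigr)^n$ where $\Phi(x_1,\dots,x_q)=x_1^{-x_1}\cdots x_q^{-x_q}$, one gets
\[
c_n^{T\text{-}\mathrm{gr}}(L) \leq Cn^d \cdot (\text{poly in }n)\cdot \Bigl(\max_{\vec{x}\in\Omega}\Phi(\vec{x})\Bigr)^n .
\]
Since the polynomial prefactor disappears upon taking $n$-th roots and $\limsup$, this yields $\limsup_{n\to\infty}\sqrt[n]{c_n^{T\text{-}\mathrm{gr}}(L)}\leq \max_{\vec{x}\in\Omega}\Phi(\vec{x})$. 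Finally, \Cref{waarde upper bound} with $q=5$ evaluates that maximum as $(5-3)+2\sqrt2 = 2+2\sqrt2$, which is exactly the claimed bound.

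There is essentially no obstacle here — every nontrivial input (polynomial bound on multiplicities, the region restriction, the value of the constrained maximum of $\Phi$) has already been established. The only point requiring a line of care is the passage from the crude inequality $\dim_F S^F(\lambda)\leq n!/(\lambda_1!\cdots\lambda_l!)$ and the asymptotic estimate to a clean statement about the $\limsup$: one must note that the number of partitions of $n$ into at most $5$ parts is itself polynomially bounded, so that summing the finitely-many-times-polynomial contributions and then extracting $n$-th roots genuinely kills all subexponential factors. This is routine, so I would simply write: "Combining \Cref{region for the partitions}, \Cref{bounded multiplicities}, the estimate \eqref{dimensie specht mod} and \Cref{waarde upper bound} as in \textbf{(b)} of the strategy above yields the claim."

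\begin{proof}
By the $S_n$-cocharacter decomposition we have $c_n^{T\text{-}\mathrm{gr}}(L) = \sum_{\lambda \vdash n} m_\lambda^T(L)\,\dim_F S^F(\lambda)$. By \Cref{region for the partitions}, if $m_\lambda^T(L)\neq 0$ then $\lambda$ has at most $5$ parts and, after dividing by $n$, lies in the region $\Omega$ of \eqref{de regio bij Lie alg example} with $q=5$. By \Cref{bounded multiplicities} there are constants $C,d\in\mathbb N$ with $\sum_{\lambda\vdash n} m_\lambda^T(L)\leq Cn^d$, and the number of $\lambda\vdash n$ with at most $5$ parts is itself polynomially bounded in $n$. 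Together with the Hook and Stirling estimate \eqref{dimensie specht mod}, which gives $\dim_F S^F(\lambda)\leq p(n)\,\bigl(\Phi(\lambda/n)\bigr)^n$ for a fixed polynomial $p$ and all relevant $\lambda$, we obtain $c_n^{T\text{-}\mathrm{gr}}(L)\leq C'n^{d'}\bigl(\max_{\vec{x}\in\Omega}\Phi(\vec{x})\bigr)^n$ for suitable constants. Taking $n$-th roots and letting $n\to\infty$, the subexponential factor disappears, so $\limsup_{n\to\infty}\sqrt[n]{c_n^{T\text{-}\mathrm{gr}}(L)}\leq \max_{\vec{x}\in\Omega}\Phi(\vec{x})$. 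Finally, \Cref{waarde upper bound} with $q=5$ gives $\max_{\vec{x}\in\Omega}\Phi(\vec{x}) = (5-3)+2\sqrt2 = 2+2\sqrt2$, which completes the proof.
\end{proof}
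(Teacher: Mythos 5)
Your proof is correct and follows exactly the paper's intended route: the corollary is stated there as an immediate consequence of part \textbf{(b)} of the proof sketch, i.e.\ the cocharacter decomposition, \Cref{region for the partitions}, \Cref{bounded multiplicities}, the estimate \eqref{dimensie specht mod} and \Cref{waarde upper bound} with $q=5$, which is precisely what you assemble. The only pedantic point is that the conclusion $\lambda_1+1\geq\lambda_4+\lambda_5$ places $\lambda/n$ only in a $\tfrac1n$-neighbourhood of $\Omega$ rather than in $\Omega$ itself, but continuity of $\Phi$ on the (compact) slightly enlarged region absorbs this in the $\limsup$, exactly as the paper tacitly does.
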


\subsection{Lower bound}\hspace{1cm}

Note that $\max_{\Omega} \Phi$, with $\Omega$ as in (\ref{de regio bij Lie alg example}), is reached at a point $(\alpha_1, \ldots, \alpha_5)$ with $\alpha_5 \neq 0$. Now we prove that $m_{\lambda}^T(L)\neq 0$ for all partitions $\lambda \vdash n$ with $\lambda_5 \neq 0$ and $\frac{\lambda}{n} \in \Omega$. So in this way we obtain the region $\Omega_0$ mentioned in the overview of the proof.

\begin{lemma} \label{alternating polynomial vb 1}
Suppose $\lambda_5 + \lambda_4 \leq \lambda_1$ and $\lambda_5 > 0$, then there exists a multilinear polynomial $f$ such that $e^{*}_{T_{\lambda}} f \notin \Id^{(FT)^{*}}(L)$ for a concrete Young tableau $T_{\lambda}$ constructed in the proof. 
\end{lemma}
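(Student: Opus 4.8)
The plan is to construct an explicit multilinear polynomial $f$, built from commutators evaluated on the basis $\mathcal{B}_L = \{(u,0),(u,u),(v,0),(v,v),(t,0)\}$, together with an explicit Young tableau $T_\lambda$, so that $e^{*}_{T_\lambda} f$ does not vanish under some $\mathcal{B}_L$-evaluation on $L$. The natural idea is to exploit the graded structure: the degree-$0$ part $L_0 \cong \mathfrak{sl}_2$ acts on both the degree-$0$ copy of $\langle u,v\rangle_L$ and (diagonally) on the degree-$1$ part $L_1$, so one can produce long commutators which, on the one hand, carry a large alternating set coming from $\mathfrak{sl}_2(\mathbb{C})$ and, on the other hand, remain nonzero because the $\mathfrak{sl}_2$-module structure on $L_1$ is nontrivial. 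Concretely, I would first recall from \Cref{region for the partitions} the role of the function $\theta$: the point is that in a nonzero monomial the total $\theta$-weight lies in $[-1,1]$, so the tableau must place the "$\theta$-neutral" letters (the two copies of $u$, giving $\theta=0$) and balance the $\theta=1$ letters ($v$, or $(v,v)$) against the single $\theta=-1$ letter $(t,0)$. This dictates the shape: the first four columns should be filled so as to realise a full alternating set on the $5$-dimensional space (or a $4$-dimensional subspace), and the remaining $\lambda_1-\lambda_4$ columns of length $\le 3$ should be symmetrised.

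Next I would write down the model polynomial. A standard device (as in Giambruno–Zaicev and in Gordienko's graded work) is to take a product of "elementary alternating blocks": for $\mathfrak{sl}_2$ one uses blocks of the form $[x, a_1, \dots, a_k]$ where $\{a_1,\dots,a_k\}$ is alternated and, after evaluation, each block acts as a fixed nonzero operator (e.g. realising a chosen matrix unit) on a chosen highest/lowest weight vector. Here I would alternate, in groups of size up to $5$ (for the first $\lambda_4$ columns, using all of $\mathcal{B}_L$) and of size $3$ thereafter (using $(u,0),(v,0),(t,0)$, i.e. the $\mathfrak{sl}_2$-copy in degree $0$), and I would arrange the "seed" variable of each block to be a degree-$1$ element so that the grading constraint $\sum t_i = $ the appropriate element of $\mathbb{Z}_2$ is met and so that the diagonal action of $L_0$ on $L_1$ keeps things nonzero. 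The hypothesis $\lambda_4 + \lambda_5 \le \lambda_1$ is exactly what guarantees there is "enough room" in the first row to host the degree-$1$ seeds and the symmetric tail while keeping the $\theta$-balance; the hypothesis $\lambda_5 > 0$ ensures the alternating blocks of size $5$ genuinely occur so that one really detects the whole of $L$ and not a proper subalgebra (this is what makes $\alpha_5 \ne 0$ attainable, matching the remark preceding the lemma). I would then choose $T_\lambda$ to be the tableau that fills columns left-to-right with the letters in the order prescribed by these blocks.

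The verification splits into two checks. First, $e^{*}_{T_\lambda} f$ is, by construction, alternating on the column-sets and symmetric on the row-sets of $T_\lambda$, so it lies in the image of the corresponding Young symmetrizer; this is formal. Second, and this is the heart, one must exhibit a single evaluation $\bar f \ne 0$. For this I would evaluate each alternating block on the standard basis of $\mathfrak{sl}_2$ (so the block becomes, up to a nonzero scalar, the operator $\operatorname{ad}$ of a chosen matrix unit), track the effect on a fixed weight vector of the $\mathfrak{sl}_2$-module $L_1$ (e.g. start from $(v,v)$ or $(u,u)$), and check that the composite is a nonzero multiple of some basis element — a finite computation in $2\times 2$ matrices that does not collapse because the weights were chosen to avoid the kernels of the successive $\operatorname{ad}$'s. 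Because $e^{*}_{T_\lambda}$ sums over row- and column-permutations, one also has to argue that no cancellation occurs among the $|R_\lambda|\cdot|C_\lambda|$ terms; the clean way is to pick the evaluation so that exactly one term in the expansion is nonzero (all others vanish for degree/weight reasons), which is the usual trick and works here because distinct permutations move a $\theta$-neutral or weight-mismatched letter into a block where it kills the evaluation.

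The main obstacle I expect is precisely this last non-cancellation/non-vanishing argument: one must design the tableau $T_\lambda$ and the evaluation simultaneously so that (i) the grading by $T=(\mathbb{Z}_2,\cdot)$ is respected throughout (each block's letters multiply to the right homogeneous degree), (ii) the $\mathfrak{sl}_2$-weights line up so every intermediate commutator is nonzero, and (iii) all but one of the symmetrizer's terms die. Threading these three constraints together for a general $\lambda$ in the region $\Omega_0$ — rather than for a single toy partition — is the delicate bookkeeping; once the block structure is fixed, each individual claim reduces to an $\mathfrak{sl}_2$-computation. I would organise the write-up by first doing the case $\lambda_4 + \lambda_5 = \lambda_1$ (the tight case, which is where $\max_\Omega \Phi$ is attained), and then noting that enlarging $\lambda_1$ only adds more symmetric tail, which is harmless.
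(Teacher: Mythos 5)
Your outline follows the same route as the paper's proof (explicit alternating blocks, one per column type, assembled into a long commutator; an explicit tableau; a single basis evaluation; non-cancellation of the row-symmetrization forced by the $\mathbb{Z}_2$-grading), and you correctly identify the two constraints that drive the construction: the $\theta$-balance from \Cref{region for the partitions} and the role of $\lambda_4+\lambda_5\le\lambda_1$. But there is a genuine gap: the lemma \emph{is} the construction, and you never actually produce $f$ or $T_\lambda$ — every step is phrased as ``I would \dots''. Worse, the one concrete design choice you do commit to is wrong. Each height-$5$ column uses all of $\mathcal{B}_L$ and hence carries total $\theta$-value $+1$, so the $\lambda_5$ full columns contribute $+\lambda_5$, and since a nonzero long commutator has total $\theta$-value in $[-1,1]$ you must manufacture roughly $\lambda_5-1$ columns of $\theta$-value $-1$. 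Such a column must contain $(t,0)$ and otherwise only $\theta$-neutral letters, so its entries are drawn from $\{(t,0),(u,0),(u,u)\}$ — in particular a balancing column of height $3$ \emph{must} use the degree-$1$ element $(u,u)$. Your proposal to fill the size-$3$ blocks with $(u,0),(v,0),(t,0)$ (the degree-$0$ copy of $\mathfrak{sl}_2$) gives $\theta$-value $0$ per column and cannot balance; for a partition such as $\lambda=(10,10,10,5,5)$ (which satisfies $\lambda_4+\lambda_5\le\lambda_1$ and $\lambda_5>0$) there are not enough columns of height $\le 2$ to absorb the deficit, so your $f$ would be an identity.

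The paper's resolution of exactly this bookkeeping is the decomposition $\beta_2=\lambda_4-\lambda_5$, $\beta_3+\beta_4=\lambda_3-\lambda_4$, $\beta_5+\beta_6=\lambda_2-\lambda_3$, $\beta_7+\beta_8=\lambda_1-\lambda_2$ subject to $\beta_3+\beta_5+\beta_7=\lambda_5$: the columns of each height $3,2,1$ are split into two sub-types, one evaluating to a multiple of $(t,0)$ (the $\theta=-1$ balancers, $\beta_3,\beta_5,\beta_7$ of them) and one evaluating to a multiple of $(u,0)$ or $(u,u)$ ($\theta=0$, the $\beta_4,\beta_6,\beta_8$ ``harmless tail''), with the hypothesis $\lambda_4+\lambda_5\le\lambda_1$ guaranteeing all $\beta_i\ge 0$. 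To repair your argument you need to introduce this (or an equivalent) splitting, write the eight block polynomials and the tableau explicitly, and then your final non-cancellation observation — that row-symmetrization terms mixing homogeneous components of different degree vanish — goes through as you describe. Also note two smaller slips: only the first $\lambda_5$ (not $\lambda_4$) columns have height $5$ and use all of $\mathcal{B}_L$; and reducing to the tight case $\lambda_4+\lambda_5=\lambda_1$ is not how the general case is handled — the extra columns must be assigned $\theta$-neutral fillings such as $(u,u)$, which is what the $\beta_8$ block does.
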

\begin{proof}
Since $\lambda_5 + \lambda_4 \leq \lambda_1$ we can define numbers $\beta_2, \ldots, \beta_8 \in \mathbb{N}$ such that $\beta_2 = \lambda_4 - \lambda_5,\beta_3 + \beta_4 = \lambda_3 - \lambda_4, \beta_5 + \beta_6 = \lambda_2 - \lambda_3,$ $\beta_7 + \beta_8 = \lambda_1 - \lambda_2$ and $\beta_3 + \beta_5 + \beta_7 = \lambda_5$. We introduce these numbers to subdivide the columns of $T_\lambda$ in order to get more control of the different $\theta$-values of each column. Recall that, by the proof of Proposition \ref{region for the partitions}, we know that the total $\theta$-value has to be between $-1$ and $1$ for a non-zero valuation. Remark also that we need the condition $\lambda_5 + \lambda_4 \leq \lambda_1$ in order to be able to assume that all $\beta_i$ are greater than or equal to zero.

Now we define alternating multilinear $(FT)^{*}$-polynomials corresponding respectively to the $\lambda_5, \beta_2, \ldots, \beta_8$ first columns. Recall that $h_t$, $t \in T$, denotes the dual basis of $FT$, i.e.\ $h_t(s) = 1$ if $t=s$ and zero otherwise and $T = (\mathbb{Z}_2, \cdot)$.

 $$f_1 := \sum_{\sigma\in \text{Sym}\lbrace i_1, \ldots, i_5\rbrace} (\sign \sigma)
[x^{h_0}_{\sigma(i_2)}
,x^{h_0}_{\sigma(i_4)}
,x^{h_1}_{\sigma(i_3)}
,x^{h_0}_{\sigma(i_1)}
,x^{h_1}_{\sigma(i_5)}]
,$$ 
$$f_2 := \sum_{\sigma\in \text{Sym}\lbrace i_1, \ldots, i_4\rbrace} (\sign \sigma)
[x^{h_0}_{\sigma(i_2)}
,x^{h_0}_{\sigma(i_4)}
,x^{h_1}_{\sigma(i_3)}
,x^{h_0}_{\sigma(i_1)}]
,$$
$$f_3 := \sum_{\sigma\in \text{Sym}\lbrace i_1, i_2, i_3\rbrace} (\sign \sigma)
[x^{h_0}_{\sigma(i_2)}
,x^{h_0}_{\sigma(i_1)}
,x^{h_1}_{\sigma(i_3)}]
,$$ $$f_4 := \sum_{\sigma\in \text{Sym}\lbrace i_1, i_2, i_3\rbrace} (\sign \sigma)
[x^{h_0}_{\sigma(i_1)}
,x^{h_1}_{\sigma(i_3)}
,x^{h_1}_{\sigma(i_2)}]
,$$ $$f_5 := \sum_{\sigma\in \text{Sym}\lbrace i_1, i_2\rbrace} (\sign \sigma)
[x^{h_0}_{\sigma(i_1)}
,x^{h_0}_{\sigma(i_2)}]
,\qquad f_6 := \sum_{\sigma\in \text{Sym}\lbrace i_1, i_2\rbrace} (\sign \sigma)
[x^{h_0}_{\sigma(i_1)}
,x^{h_1}_{\sigma(i_2)}]
,$$ $$f_{7} := x^{h_0}_{i_1},\qquad f_{8} := x^{h_1}_{i_1}.$$
Finally, if $\beta_7 \neq 0$, define the polynomial
$$f=[(f_1f_3)^{\beta_3}, (f_1 f_5)^{\beta_5}, (f_1 f_7)^{\beta_7-1}, f_1, f_2^{\beta_2},f_4^{\beta_4},f_6^{\beta_6}, f_8^{\beta_8}, f_7] \in V_n^{(FT)^{*}}(F),$$
where by $[x,(ab)^{c}]$ we denote the polynomial $\underset{c-\mbox{times}}{[x,\underbrace{a,b},\ldots, a,b]}$.\newline If $\beta_7=0$ and $\beta_5 \neq 0$ then we define the polynomial 
$$f^{\prime}=[(f_1f_3)^{\beta_3}, (f_1 f_5)^{\beta_5-1}, f_1, f_2^{\beta_2},f_4^{\beta_4},f_6^{\beta_6}, f_8^{\beta_8}, f_5] \in V_n^{(FT)^{*}}(F) $$
and 
$$f^{\prime \prime}=[(f_1f_3)^{\beta_3-1}, f_1, f_2^{\beta_2},f_4^{\beta_4},f_6^{\beta_6}, f_8^{\beta_8}, f_3] \in V_n^{(FT)^{*}}(F) $$
if $\beta_5 = \beta_7 = 0$. Note that $\beta_3 \neq 0$ as $\lambda_5 = \beta_3 + \beta_5 + \beta_7 >0$.
Note that here different copies of $f_i$ depend on different variables. Thus: 

The copies of $f_1$ are alternating polynomials
of degree $5$ corresponding to the first $\lambda_5$ columns of height $4$.

The copies of $f_2$ are alternating polynomials of degree $4$ corresponding to
the next $\beta_2$ columns of height $5$.

\ldots

The copies of $f_{8}$ are polynomials of degree $1$ corresponding to the last
$\beta_{8}$ columns of height $1$. \newline However, the same values will be substituted.\newline\vspace{0.2cm}
Consider now the Young tableau $T_{\lambda}$ given by the figure below. We prove that $e_{T_{\lambda}}^{*} f$ does not vanish on $L$. First remark that $f \notin \Id^{(FT)^{*}}(L)$. Indeed the following substitution in $f$ is equal to a multiple of the element $(u,0)$. 

\begin{figure}[h]\caption{}\label{subst in poly vb 1}
$$T_{\lambda}=\begin{array}{|c|c|c|c|c|c|c|c|}
\multicolumn{1}{c}{\lambda_5} & \multicolumn{1}{c}{\beta_2} & \multicolumn{1}{c}{\beta_3} & \multicolumn{1}{c}{\beta_4} & \multicolumn{1}{c}{\beta_5} & \multicolumn{1}{c}{\beta_6} & \multicolumn{1}{c}{\beta_7} & \multicolumn{1}{c}{\beta_8}  \\
\hline
 (t,0) & (t,0) & (t,0) & (t,0) & (t,0) & (t,0) & (t,0) & (u,u) \\
 \cline{1-8}
(u,0) & (u,0) & (u,0) & (v,v) & (u,0) & (v,v) \\
 \cline{1-6}
 (u,u)& (u,u) & (u,u) & (u,u)  \\
 \cline{1-4}
 (v,0) & (v,0) \\
 \cline{1-2}
 (v,v) \\
 \cline{1-1}
\end{array}$$ 
\end{figure}

 (Here in the $i$-th block we have $\beta_i$ columns with the same values
in all cells of a row. For shortness, we depict each value for each block only once.
The tableau $T_{\lambda}$ is still of the shape $\lambda$.)

In fact one easily checks that after substitution $f_i$, for $i=3,5,7$, yields respectively $-8(t,0), 4(t,0)$ and $(t,0)$, for $i=2,4,6$, respectively $16(u,0), 2(u,0)$ and $2(u,0)$ and $f_1$ gives $-64(v,0)$. 

We claim that the substitution in $e^{*}_{T_{\lambda}} f$ as in figure (\ref{subst in poly vb 1}) is a non-zero multiple of the evaluated value of $f$. First remark, by construction of $f$, that $e^{*}_{T_{\lambda}} f = C a_{T_{\lambda}} f$ with $C =(5!)^{\lambda_5}(4!)^{\beta_2} (3!)^{\beta_3 + \beta_4} (2!)^{\beta_5 + \beta_6}$ and $a_{T_{\lambda}}$ symmetrizes $f$ corresponding to the rows of $T_{\lambda}$. Since $(t,0)$ and $(u,u)$ are in different homogeneous components all terms where $a_{T_{\lambda}} f$ interchanges a $(t,0)$ with $(u,u)$ will be zero. Similarly if a $(u,0)$ is interchanged with a $(v,v)$ in the second row, then this term is zero. So the claim and therefore the proposition are proven.
\end{proof}

\begin{corollary}\label{th: lie alg with non-integer Exponent}
With $L$ and $T$ as before, we have that
$$\exp^{T\text{-}\mathrm{gr}}(L):= \limsup_{n\to \infty} \sqrt[n]{c_n^{T\text{-}\mathrm{gr}}(L)} = 2 + 2 \sqrt{2}.$$
\end{corollary}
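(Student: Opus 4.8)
The plan is to prove equality by combining the upper bound already established with a matching lower bound produced from a single, well-chosen family of partitions. Since $\limsup_{n\to\infty}\sqrt[n]{c_n^{T\text{-}\mathrm{gr}}(L)}\le 2+2\sqrt 2$ is exactly \Cref{upper bound frac vb 1}, it suffices to show $\liminf_{n\to\infty}\sqrt[n]{c_n^{T\text{-}\mathrm{gr}}(L)}\ge 2+2\sqrt 2$. Recalling the decomposition $c_n^{T\text{-}\mathrm{gr}}(L)=\sum_{\lambda\vdash n}m_\lambda^T(L)\dim_F S^F(\lambda)$, whenever $m_\mu^T(L)\ne 0$ we have $c_n^{T\text{-}\mathrm{gr}}(L)\ge \dim_F S^F(\mu)$. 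So I would exhibit, for every sufficiently large $n$, a partition $\mu=\mu^{(n)}\vdash n$ with $m_\mu^T(L)\ne 0$ and $\sqrt[n]{\dim_F S^F(\mu^{(n)})}\to 2+2\sqrt 2$.

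To choose $\mu^{(n)}$, first fix a point $\vec\alpha=(\alpha_1,\dots,\alpha_5)\in\Omega$ at which $\Phi$ attains its maximum $2+2\sqrt 2$ on $\Omega$; this value is \Cref{waarde upper bound} with $q=5$, and as recorded at the beginning of the Lower bound subsection such a maximizer satisfies $\alpha_5\ne 0$. For each $n$ set $\mu_i:=\lfloor\alpha_i n\rfloor$ for $2\le i\le 5$ and $\mu_1:=n-\sum_{i=2}^5\mu_i$. Then $\mu=(\mu_1,\dots,\mu_5)$ is a genuine partition of $n$ for $n$ large: monotonicity of the floor gives $\mu_2\ge\mu_3\ge\mu_4\ge\mu_5\ge 0$ from $\alpha_2\ge\alpha_3\ge\alpha_4\ge\alpha_5\ge 0$; since $\sum_{i=2}^5\mu_i\le(1-\alpha_1)n$ we get $\mu_1\ge\alpha_1 n\ge\alpha_2 n\ge\mu_2$; and $\mu_5=\lfloor\alpha_5 n\rfloor>0$ once $n>1/\alpha_5$. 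Finally the constraint $\alpha_4+\alpha_5\le\alpha_1$ defining $\Omega$ yields $\mu_4+\mu_5\le(\alpha_4+\alpha_5)n\le\alpha_1 n\le\mu_1$. Hence the hypotheses of \Cref{alternating polynomial vb 1} hold for $\mu$, and we conclude $m_\mu^T(L)\ne 0$.

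It remains to estimate $\dim_F S^F(\mu)$. Using the elementary hook-length lower bound $\dim_F S^F(\mu)\ge n!\big/\bigl(p(n)\,\mu_1!\cdots\mu_5!\bigr)$ with $p(n)$ polynomial in $n$ (as recalled in part \textbf{(c)} of the outline, with $k=5$), together with Stirling's formula and the fact that $\mu_i/n\to\alpha_i$, one obtains $\tfrac1n\log\bigl(n!/\prod_i\mu_i!\bigr)=-\sum_i\tfrac{\mu_i}{n}\log\tfrac{\mu_i}{n}+O\!\bigl(\tfrac{\log n}{n}\bigr)\to -\sum_i\alpha_i\log\alpha_i=\log\Phi(\vec\alpha)$, so that $\sqrt[n]{\dim_F S^F(\mu^{(n)})}\to\Phi(\vec\alpha)=2+2\sqrt 2$. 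Therefore $\liminf_{n}\sqrt[n]{c_n^{T\text{-}\mathrm{gr}}(L)}\ge 2+2\sqrt 2$, and with \Cref{upper bound frac vb 1} we get $\lim_{n}\sqrt[n]{c_n^{T\text{-}\mathrm{gr}}(L)}=2+2\sqrt 2$, which in particular is the value of the $\limsup$.

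I expect the only genuinely delicate point to be the bookkeeping around the rounding in the definition of $\mu^{(n)}$: one must check, uniformly in all large $n$, that the floors neither destroy the weakly-decreasing shape nor violate $\mu_4+\mu_5\le\mu_1$, and that the continuity of $\Phi$ at $\vec\alpha$ (with the convention $0^0=1$) legitimately lets one pass to the limit. All of this is elementary, and everything conceptual has already been assembled in \Cref{bounded multiplicities}, \Cref{region for the partitions}, \Cref{waarde upper bound} and \Cref{alternating polynomial vb 1}.
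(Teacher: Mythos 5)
Your proposal is correct and follows essentially the same route as the paper's own proof: take a maximizer $\vec\alpha$ of $\Phi$ on $\Omega$ with $\alpha_5>0$, round it to a partition $\mu^{(n)}\vdash n$ satisfying the hypotheses of \Cref{alternating polynomial vb 1} so that $m_{\mu}^T(L)\neq 0$, and estimate $\dim_F S^F(\mu^{(n)})$ via hook lengths and Stirling to match the upper bound of \Cref{upper bound frac vb 1}. Your explicit verification of the rounding bookkeeping (and the normalization $\mu_1=n-\sum_{i=2}^5\mu_i$) is in fact slightly more careful than the paper's write-up.
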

\begin{proof}
For the sake of completeness, we write how Lemma \ref{alternating polynomial vb 1} implies the lower bound, even though this was already  sketched before. Let $(\alpha_1, \ldots, \alpha_5) \in \mathbb{R}^5$ be an extremal point of the function $\Phi(x_1, \ldots, x_5) = \frac{1}{x_1^{x_1}. \ldots . x_5^{x_5}}$ on the polytope
$$
\Omega:=\left\{ (\alpha_1, \ldots, \alpha_5) \in \mathbb{R}^5 \mid \sum_{1 \leq i \leq 5} \alpha_i = 1,~ \alpha_1 \geq \ldots \geq \alpha_5 > 0,~ \alpha_{4} + \alpha_5 \leq \alpha_1 \right\}.
$$
By Lemma \ref{waarde upper bound}, $\Phi(\alpha_1, \ldots, \alpha_5) = 2 + 2\sqrt{2}.$ Define now the partition $\mu \vdash n$ by
$$ \left\{ \begin{array}{ll}
\mu_i = \lfloor n \alpha_{i} \rfloor & \mbox{ for } 2 \leq i \leq 5 \\
\mu_1 = 1 - \sum_{i=2}^{5} \mu_i.
\end{array}\right. $$

Since $(\alpha_1, \ldots, \alpha_5) \in \Omega$, the partition $\mu$ satisfies $\mu_4 + \mu_5 \leq \mu_1$ and $\mu_5 >0$. Thus, by Lemma \ref{alternating polynomial vb 1}, $m_{\mu}^T(L) \neq 0$. Moreover, for every $\epsilon > 0$ there exists a $n_0$ such that $\Phi(\frac{\mu_1}{n}, \ldots, \frac{\mu_5}{n}) \geq 2 + 2\sqrt{2} - \epsilon $ for all $n \geq n_0$. Now, for some constants $C_1, B_1 \in \mathbb{R}$
$$\dim_F (S^F_{\mu}) \geq \frac{n!}{n^{5.4}\mu_1! \ldots \mu_5!} \geq C_1 n^{B_1} \left( \frac{1}{(\frac{\mu_1}{n})^{\frac{\mu_1}{n}}. \cdots .(\frac{\mu_5}{n})^{\frac{\mu_5}{n}}}\right)^n  \geq C_1 n^{B_1} (d-\epsilon)^n,$$
which yields the lower bound $\liminf_{n\to \infty} \sqrt[n]{c_n^{T\text{-}\mathrm{gr}}(L)} \geq 2 + 2 \sqrt{2}$. Together with Corollary \ref{upper bound frac vb 1} we get that $\exp^{T\text{-}\mathrm{gr}}(L) =  \lim_{n\to \infty} \sqrt[n]{c_n^{T\text{-}\mathrm{gr}}(L)} = 2 + 2 \sqrt{2}$. 
\end{proof}

\section{Amitsur conjecture for $H$-semisimple Lie algebras}\label{sectie positive results Exponent Lie algebras}

In this section $H$ will always be a finite dimensional associative algebra with $1$ and $L$ a finite dimensional Lie algebra on which $H$ is acting in a generalized way, i.e.
\begin{equation} \label{gen action}
 h . [l_1,l_2] = \sum\limits_{i=1}^k [h'_{i} l_1 , h''_{i} l_2]
\end{equation}

 for some $h'_{i}, h''_{i} \in H$. We refer to \cite{ASGordienko5, Gor} for examples of generalized actions and for all basic definitions such as $H$-polynomials and $H$-codimensions. 
 
 \begin{definition}
The Lie algebra $L$ is called $H$-simple if it is non-abelian and the only $H$-invariant ideals of $L$ are $0$ and $L$. Furthermore $L$ is said to be $H$-semisimple if it is the direct sum of $H$-simple Lie algebras.
 \end{definition}
 If $L$ is $H$-simple, as $[L,L]$ is $H$-invariant, then $[L,L]=L$. As explained in \Cref{Het tegenvb is niet H-semisimple}, the Lie algebra $L$ with an irrational graded exponent constructed in \Cref{frac expo sectie} is not $H$-semisimple. The goal of this section is to contribute to the $H$-version of Amitsur's conjecture in the $H$-semisimple case without imposing restrictions on the acting algebra $H$. More concretely in \Cref{H-ss integer exp} we obtain a positive result if we moreover assume that $L$ is a semisimple Lie algebra.\vspace{0,1cm}
 
 {\it Non-polynomial Identities with enough alternations.}\vspace{0,1cm}
 
\noindent To start, recall that the adjoint representation, $\ad: L \rightarrow \End_F(L)$, of $L$ is defined as $\ad(l)(l^{\prime}) = [l,l^{\prime}]$ for all $l,l^{\prime} \in L$. We will sometimes write $\ad_l := \ad(l)$. Further, denote the map corresponding to the $H$-action by $\rho: H \rightarrow \End_F(L)$. Remark that by (\ref{gen action}) the following equality holds
\begin{equation}\label{actie H naar rechts bewegen}
\rho(h) \ad(l) = \sum_{i} \ad(h_i^{\prime} l) \rho(h_i^{\prime \prime}).
\end{equation}

\noindent Finally, by $Q_{t,k,n}^H \subseteq V_n^H$ we denote the subspace spanned by all multilinear $H$-polynomials alternating in $k$ disjoint sets $\{ x_1^i, \ldots, x_t^i \} \subseteq \{ x_1, \ldots, x_n \}$ of size $t$.

{\it Notational assumption:} In the sequel of this section we fix an $F$-basis $\mathcal{B}(L)= \{ l_1, \ldots, l_t \}$ of $L$.

First we prove that the necessary $H$-polynomial with sufficiently numerous alternations exists. The following is an analog of \cite[Theorem 1]{GiaSheZai}.

\begin{theorem}\label{non poly voor H-simple ss}
Let $L$ be a $H$-simple semisimple Lie algebra endowed with a generalized action of a finite dimensional associative algebra $H$ with $1$.  Then there exist a non-zero positive integer constant $C$ and $\overline{z}_1, \ldots, \overline{z}_C \in L$ such that for any $k$ there exists
$$f=f(x_1^1, \ldots, x_t^1; \ldots ; x_1^{2k}, \ldots, x_t^{2k}; z_1, \ldots, z_C ; z) \in Q^H_{t,2k,2kt+C+1}$$
such that for any $\overline{z} \in L$ we have $f(l_1, \ldots, l_t; \ldots ; l_1, \ldots, l_t; \overline{z_1}, \ldots, \overline{z_C}; \overline{z}) = \overline{z}$.
\end{theorem}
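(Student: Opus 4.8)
The plan is to mimic the associative-algebra argument of Giambruno--Shestakov--Zaicev \cite[Theorem 1]{GiaSheZai}, but carried out in the Lie setting with the generalized $H$-action bookkeeping governed by \eqref{actie H naar rechts bewegen}. The starting point is that $L$ is a semisimple Lie algebra which is, moreover, $H$-simple; hence $[L,L]=L$ and the adjoint map $\ad\colon L\to \End_F(L)^{[-]}$ is injective, so $\rho_L:=\ad$ realizes $L$ inside the associative algebra $\mathcal{A}:=\End_F(L)$. The crucial upgrade over the classical case is that, by \eqref{actie H naar rechts bewegen}, the subspace $\rho(H)\cdot\ad(L)\subseteq\mathcal{A}$ is closed under right multiplication by elements of $\rho(H)$ and under left multiplication by $\ad(L)$; thus the associative subalgebra $B\subseteq\mathcal{A}$ generated by $\ad(L)$ and $\rho(H)$ has $\ad(L)\cdot\rho(H)$ (equivalently $\rho(H)\cdot\ad(L)$) as a two-sided-ideal-like span that still acts on $L$, and $H$-invariant ideals of $L$ correspond to $B$-submodules of $L$. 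Since $L$ is $H$-simple, $L$ is an irreducible $B$-module, so by the Jacobson density theorem $B$ surjects onto $\End_F(L)$.

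First I would use density to produce, for the fixed basis $\mathcal{B}(L)=\{l_1,\dots,l_t\}$, elements realizing the matrix units: there exist finitely many products of the form $\ad(l_{j})\rho(h)$ whose span contains, for each pair $(p,q)$, an operator acting as $e_{pq}$ on $L$ (sending $l_q\mapsto l_p$, $l_r\mapsto 0$ otherwise). Collect the ``extra'' Lie-algebra elements $l_j$ appearing in these density products into the tuple $\overline{z}_1,\dots,\overline{z}_C$ (and let the $H$-coefficients be absorbed into the formal variables $z_1,\dots,z_C$ via superscripts $z_i^{h}$); the constant $C$ is thereby determined by the density presentation and is independent of $k$. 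Next, following the classical template, I would build for each index set $\{x_1^i,\dots,x_t^i\}$ an alternating ``Regev-type'' block: an alternating sum over $\mathrm{Sym}\{1,\dots,t\}$ of a long left-normed commutator in $x^i_{\sigma(1)},\dots,x^i_{\sigma(t)}$ interleaved with the density pieces, arranged so that the only surviving evaluation on $\mathcal{B}(L)$ sends the block to (a scalar multiple of) a chosen matrix unit $e_{p_iq_i}$ acting on the running argument. Chaining $2k$ such blocks $e_{p_1q_1},e_{q_1p_2},e_{p_2q_2},\dots$ together with the density ``connectors'' $z_1,\dots,z_C$ and feeding in the final slot $z$, one gets $f\in Q^H_{t,2k,2kt+C+1}$ with $f(l_1,\dots,l_t;\dots;l_1,\dots,l_t;\overline{z}_1,\dots,\overline{z}_C;\overline{z})$ equal to $e_{11}$ (say) applied to $\overline{z}$; a harmless final normalization or choice of matrix units makes this exactly $\overline{z}$ for \emph{every} $\overline{z}\in L$, as required.

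The main obstacle, and where real care is needed, is the non-associativity: a left-normed Lie commutator $[x_1,\dots,x_m]$ evaluated on $L$ equals $\ad_{x_1}$ \emph{no longer}, but rather the product $\ad_{[x_1,\dots,x_{m-1}]}$ acting by a single $\ad$, so one cannot freely compose $\ad$-operators the way one multiplies matrices in the associative proof. One must instead use the standard device of working inside $U(L)$ (or the associative envelope $B$ above) and expressing the needed associative products of $\ad(l_j)$'s and $\rho(h)$'s as values of Lie polynomials via the identity $\ad_{[a,b]}=\ad_a\ad_b-\ad_b\ad_a$ together with \eqref{actie H naar rechts bewegen} to move $H$-actions past $\ad$'s; the alternating blocks $f_i$ must be designed so that, on the basis $\mathcal{B}(L)$, exactly one permutation $\sigma$ contributes and yields the desired associative product, all others dying because two equal basis vectors are fed into an alternating slot. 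Verifying that this design is simultaneously (i) genuinely alternating in each of the $2k$ size-$t$ sets, (ii) multilinear, and (iii) evaluates as a nonzero scalar times the intended matrix unit on $\mathcal{B}(L)$ is the technical heart; the semisimplicity of $L$ is used precisely to guarantee $\ad$ is injective (so the realization in $\End_F(L)$ is faithful) and that $[L,L]=L$ so the commutator blocks do not collapse, while $H$-simplicity is used only through Jacobson density. Everything else is the routine symmetrization bookkeeping that I would not spell out in full.
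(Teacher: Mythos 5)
Your opening move coincides with the paper's: view $L$ as a module over the multiplication algebra generated by $\ad(L)$ and $\rho(H)$, use \eqref{actie H naar rechts bewegen} to push the $\rho(h)$'s to the right, observe that $H$-invariant ideals are exactly the submodules so that $H$-simplicity gives irreducibility, and conclude by Jacobson density that this algebra is all of $\End_F(L)$. From there, however, you diverge, and the divergence is where the gaps lie. The paper does \emph{not} build matrix units and chain alternating blocks; it extends $\{\ad_{l_1},\dots,\ad_{l_t}\}$ (linearly independent because $L$ semisimple forces $Z(L)=0$ --- this, not faithfulness of the realization, is what semisimplicity buys) to a basis of $\End_F(L)\cong M_t(F)$ by elements of the form $\ad(l_{i_j})\rho(h_j)$, and substitutes this basis into Regev's \emph{central} polynomial $f_t$ of $M_t(F)$. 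Centrality is the whole point: the evaluation is automatically $K\cdot\mathrm{id}_L$ with $K\neq 0$, so after dividing by $K$ and appending the slot $z$ one gets $f(\dots;\overline z)=\overline z$ for \emph{every} $\overline z$, exactly as the statement demands, with the two alternating sets of size $t$ inherited from the alternation of $f_t$ in its two sets of $t^2$ variables. The case $k>1$ is then obtained by Razmyslov's trick, not by direct chaining.

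Two concrete defects in your sketch. First, your chained construction naturally outputs a single matrix unit $e_{11}$ acting on $\overline z$, i.e.\ a rank-one projection of $\overline z$, not $\overline z$ itself; the ``harmless final normalization'' is not harmless, since to realize $\mathrm{id}_L=\sum_p e_{pp}$ you must sum compatible chains over all diagonal units and check the sum is still alternating in each of the $2k$ sets --- this is precisely the difficulty that the central polynomial dissolves. Second, your stated mechanism for why only one permutation survives in each alternating block (``two equal basis vectors are fed into an alternating slot'') cannot be right: the $t$ slots receive the $t$ \emph{distinct} basis elements under every $\sigma$, so nothing vanishes for that reason; the wrong orderings must instead be killed by the interleaved fixed operators acting as filters, and you neither construct those operators nor verify the vanishing. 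Relatedly, you misplace the non-associativity obstacle: since $[z,x_1,\dots,x_m]=\pm\,\ad_{x_m}\cdots\ad_{x_1}(z)$, left-normed commutators with a distinguished argument realize arbitrary compositions of $\ad$'s, so composing operators is not the issue; the issue is that the alternating variables enter only through the $t$-dimensional subspace $\ad(L)\subseteq\End_F(L)$, which is exactly why the fixed elements $\overline z_1,\dots,\overline z_C$ (completing $\ad(L)\cdot\rho(H)$-words to a basis of $\End_F(L)$) are needed and why $C$ is independent of $k$. As written, the ``technical heart'' you defer is the entire proof, and the one mechanism you do assert is incorrect.
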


We will only give the proof in case $k=1$.  In order to obtain more alternating sets, i.e. $k > 1$, it became traditional to use a trick by Razmyslov \cite[Chapter III]{Razmyslov}. Despite that we are working with Lie algebra's with a generalized action, the proof is completely similar to those of Lemma 3 and Theorem 1 in \cite{GiaSheZai} where non-associative algebras without $H$-action are considered or also similar to the proof of \cite[Theorem 7]{ASGordienko3} where associative algebras endowed with a generalized action are considered.

\begin{proof}[Proof for $k=1$]
One can consider $L$ as module over its multiplication algebra $M(L) = \mbox{span}_F \{ \rho(H), \ad(L) \}$. Since $L$ is $H$-simple it is moreover an irreducible faithful module over $M(L)$ and so, as $L$ is finite dimensional, by the Density theorem $\End_F(L) = \mbox{span}_F \{ \rho(H), \ad(L) \}$.

Note that, due to (\ref{actie H naar rechts bewegen}), we always can move the $\rho(h)$ to the right in any expression in $\mbox{span}_F \{ \rho(H), \ad(L) \}$. Thus $\End_F(L) = \mbox{span}_F \{ \ad_l \circ \rho(h) \mid l \in L, h \in H \}$ and of course $\End_F(L) \cong M_t(F)$ as vector spaces, since $t = \dim L$. Let 
$$\mathcal{B}(\End_F(L)) = \{ \ad_{l_1}, \ldots, \ad_{l_t}; \ad(l_{i_1}) \rho(h_1), \ldots, \ad(l_{i_s}) \rho(h_s)\}$$  
be a basis of $\End_F(L)$ with $i_j \in \{ 1, \ldots, t \}$ appropriate indices.
Recall that by \cite{Formanek} the Regev polynomial
$$
\hspace{-0,17cm}\begin{array}{rl} f_t(x_1, \ldots, x_{t^2}; y_1, \ldots, y_{t^2}) = &\sum\limits_{\sigma, \tau \in S_{t^2}} \text{sgn} (\sigma \tau) x_{\sigma(1)} y_{\tau(1)} x_{\sigma(2)}x_{\sigma(3)}x_{\sigma(4)}y_{\tau(2)}y_{\tau(3)}y_{\tau(4)} \\ 
& \qquad \qquad \ldots x_{\sigma(t^2-2t + 2)} \ldots x_{\sigma(t^2)}y_{\tau(t^2-2t +2)} \ldots y_{\tau(t^2)} 
\end{array}$$
is a central polynomial of $M_t(F)$. Replace now each $x_1, \ldots, x_t$ by the respective $\ad_{x_i}$, $y_1,\ldots, y_t$ by $\ad_{y_i}$, $x_{t+j}$ by $\ad_{z_j} \circ \rho(h_j)$ and $y_{t+j}$ by $\ad_{v_{s+j}} \circ \rho(h_j)$ for $ 1 \leq j \leq s$, where $x_i, y_i, z_i$ are new variables that take values in $L$. Denote the polynomial that we get after this substitution by $\tilde{f}_t$. Note that if we evaluate $\tilde{f}_t$ by $x_i = y_i = l_i$ and $z_j = z_{s+j} = l_{i_j}$ then we get $K \mbox{id}_L$ for some non-zero constant $K \in F$. Finally put $C= 2s$, then clearly $f := K^{-1} \tilde{f}_t (z) \in V_n^H$ satisfies the needed properties. 
\end{proof} 

{\it Forumulas $H$-exponent.}\vspace{0,1cm}

\noindent In case of $H$-simple semisimple Lie algebras, the polynomial $f$ constructed in Theorem \ref{non poly voor H-simple ss} delivers now, in the classical way, that the $H$-exponent equals the dimension.

\begin{theorem}\label{exp is dim voor H-simple}
Let $L$ be an $H$-simple semisimple Lie algebra endowed with a generalized action of a finite dimensional unital associative algebra $H$. Then $\exp^H(L) = \dim L$.
\end{theorem}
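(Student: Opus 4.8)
The proof consists of the usual two inequalities, with the upper bound being the standard one and the lower bound resting on Theorem~\ref{non poly voor H-simple ss}. For the upper bound $\exp^H(L) \leq \dim L$, I would invoke the general estimate $c_n^H(L) \leq (\dim \rho(H))^n c_n(L)$ from the introduction together with the classical fact that for a finite dimensional semisimple Lie algebra the ordinary exponent equals $\dim L$ (by Zaicev's theorem, or more precisely the results of Giambruno--Regev--Zaicev for semisimple Lie algebras), so $\limsup_n \sqrt[n]{c_n(L)} = \dim L$. However this only gives a crude bound involving $\dim \rho(H)$; to get the sharp $\dim L$ I would instead argue directly. Writing $\frac{V_n^H}{V_n^H \cap \Id^H(L)}$ as an $FS_n$-module and decomposing into Specht modules $\sum_{\lambda \vdash n} m_\lambda^H(L)\, S^F(\lambda)$, one shows first (as in the $H$-nice/Hopf setting, cf. \cite{ASGordienko5}) that the multiplicities are polynomially bounded and that $m_\lambda^H(L) = 0$ whenever $\lambda_{d+1} \neq 0$ where $d = \dim L$, since any multilinear $H$-polynomial alternating in $d+1$ variables vanishes on $L$ (an alternating function of more arguments than $\dim L$ basis evaluations is zero). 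Then the Hook and Stirling estimates, exactly as in step (b) of Section~\ref{frac expo sectie}, bound $\limsup_n \sqrt[n]{c_n^H(L)}$ by $\max \Phi$ over partitions with at most $d$ rows, which is $d$; hence $\exp^H(L) \leq \dim L$.

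For the lower bound $\exp^H(L) \geq \dim L$, I would use Theorem~\ref{non poly voor H-simple ss} directly. Fix $t = \dim L$ and, for each $k$, take the polynomial $f = f(x_1^1,\ldots,x_t^1;\ldots;x_1^{2k},\ldots,x_t^{2k};z_1,\ldots,z_C;z) \in Q^H_{t,2k,2kt+C+1}$ provided by that theorem, which is alternating in $2k$ disjoint sets of size $t$ and is not an $H$-identity (it evaluates to $\overline{z} \neq 0$ on the prescribed substitution). Set $n = 2kt + C + 1$. Since $f \notin \Id^H(L)$ and $f \in Q^H_{t,2k,n}$, a standard Young-symmetrizer argument (as in \cite{GiaSheZai} or \cite{GiaZaibook}) shows that the $FS_n$-submodule of $\frac{V_n^H}{V_n^H \cap \Id^H(L)}$ generated by the image of $f$ contains a Specht module $S^F(\lambda)$ for some $\lambda \vdash n$ whose Young diagram contains the rectangle with $t$ rows and $2k$ columns; in particular $\lambda$ has at most $t$ nonzero rows and $\lambda_1 \leq n/t + O(1)$, so $\lambda$ is ``nearly rectangular'' of $t$ rows. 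Therefore $m_\lambda^H(L) \neq 0$ and $c_n^H(L) \geq \dim_F S^F(\lambda)$.

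It then remains to estimate $\dim_F S^F(\lambda)$ from below for such a nearly-rectangular $\lambda$. Using the Hook formula bound $\dim_F S^F(\lambda) \geq \frac{n!}{t^{t(t-1)} \lambda_1! \cdots \lambda_t!}$ (the same inequality used in step (c) of Section~\ref{frac expo sectie}) together with Stirling, one gets $\dim_F S^F(\lambda) \geq C' n^{B'} \Phi(\tfrac{\lambda_1}{n},\ldots,\tfrac{\lambda_t}{n})^n$, and since $\lambda$ is asymptotically the rectangle $(\tfrac{n}{t},\ldots,\tfrac{n}{t})$ one has $\Phi(\tfrac{\lambda_1}{n},\ldots,\tfrac{\lambda_t}{n}) \to t$ as $k \to \infty$. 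Hence for every $\epsilon > 0$ and all large $n$ of the form $2kt+C+1$ one gets $c_n^H(L) \geq C' n^{B'} (t-\epsilon)^n$, yielding $\liminf_n \sqrt[n]{c_n^H(L)} \geq t - \epsilon$; letting $\epsilon \to 0$ and filling in the intermediate $n$ by the standard monotonicity trick $c_n^H(L) \geq c_{n-1}^H(L)$ (or by padding $f$ with an extra free variable) gives $\liminf_n \sqrt[n]{c_n^H(L)} \geq \dim L$. Combining the two inequalities yields $\exp^H(L) = \dim L$.

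\textbf{Main obstacle.} The routine parts are the Stirling asymptotics and the symmetrizer bookkeeping; the genuinely delicate point is ensuring the sharp upper bound $\dim L$ rather than $\dim\rho(H)\cdot(\text{something})$ — i.e. verifying that the multiplicity-boundedness and the ``at most $\dim L$ rows'' vanishing both survive in the presence of an arbitrary (non-Hopf) generalized action, which is where one must lean on the fact that alternation is controlled purely by $\dim L$ and that the $H$-action does not inflate the relevant Specht content beyond $t$ rows.
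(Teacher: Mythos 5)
Your lower bound is essentially the paper's argument: take the alternating non-identity $f\in Q^H_{t,2k,n}$ from Theorem~\ref{non poly voor H-simple ss}, extract via the decomposition $FS_n=\bigoplus FS_ne^{*}_{T_\lambda}$ a partition whose diagram contains the $t\times 2k$ rectangle (and has at most $t$ rows, since a column of height $>t$ would make $e^{*}_{T_\lambda}f$ an identity), and estimate $\dim_F S^F(\lambda)$ by Hook--Stirling; the paper reduces to the exact rectangle $((2k)^t)$ by the branching rule rather than estimating the nearly rectangular $\lambda$ directly, and it sidesteps your monotonicity/padding step by choosing $k=\lfloor\frac{n-(2t+C+1)}{2t}\rfloor$ as a function of $n$ from the outset, but these are cosmetic differences. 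Where you genuinely diverge is the upper bound, and there your route has a soft spot that the paper avoids. The paper's upper bound is a one-line dimension count: the evaluation map $V_n^H\rightarrow \Hom_F(L^{\otimes n},L)$ has kernel exactly $V_n^H\cap\Id^H(L)$, so $c_n^H(L)\leq\dim_F\Hom_F(L^{\otimes n},L)=(\dim L)^{n+1}$, which needs neither semisimplicity nor any control of Specht multiplicities. Your proposed route through the cocharacter decomposition requires the total multiplicity $\sum_\lambda m_\lambda^H(L)$ to be polynomially bounded; for a generalized action of an arbitrary (non-Hopf) associative algebra this is not an off-the-shelf fact (the paper establishes it only for semigroup gradings, via the graded Ado theorem, and \cite{ASGordienko5} treats the Hopf case), and the cleanest way to prove it is precisely to embed $V_n^H/(V_n^H\cap\Id^H(L))$ into $\Hom_F(L^{\otimes n},L)\cong (L^{*})^{\otimes n}\otimes L$ and apply Schur--Weyl --- at which point you have rediscovered the paper's direct bound and the detour through multiplicities buys you nothing. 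So: correct in outline, but you should replace the multiplicity-based upper bound by the direct $\Hom_F(L^{\otimes n},L)$ estimate, which closes the one gap you yourself flagged.
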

\begin{proof}
Let $n \geq 2t+c+1$ and $k = \lfloor \frac{n - (2t+c+1)}{2t} \rfloor$. By Theorem \ref{non poly voor H-simple ss} there exists a $f \in Q^H_{t, 2k, n}$ which is a non-identity of $L$. We start by proving that there exists a $\lambda =(\lambda_1, \ldots, \lambda_h) \vdash n$ with $\lambda_i \geq 2k$ for $1 \leq i \leq h = t = \dim(L)$ such that $e_{\lambda} f \notin \Id^H(L)$ and in particular $m^H_{\lambda}(L) \neq 0$. 

It is well known that we can write $FS_n = \bigoplus\limits_{\substack{\lambda \vdash n, \\ T_{\lambda} \mbox{ {\tiny standard}}}} FS_ne^{*}_{T_{\lambda}}$. Consequently, as $f \notin \Id^H(L)$, there exists a $\lambda \vdash n$ such that $e^{*}_{T_\lambda} f \notin \Id^H(L)$. Moreover $\lambda_i \geq \lambda_{t} \geq 2k$. Indeed, $e^{*}_{\lambda} = b_{T_{\lambda}} a_{T_{\lambda}}$ and $a_{T_{\lambda}}$ is symmetrizing the variables of each row of $T_{\lambda}$, so each row of $T_{\lambda}$ may contain at most one variable from each $X_i = \{ x_1^{(i)}, \ldots, x_t^{(i)} \}$ since otherwise, $f$ being alternating in $X_i$, $a_{T_{\lambda}}f = 0$. Thus $\sum_{i=1}^{t-1} \lambda_i \leq 2k(t-1) + (n-2kt)= n-2k$ and $\lambda_t =  \sum_{i=1}^{t} \lambda_i - \sum_{i=1}^{t-1} \lambda_i = n - (n-2k)= 2k$ as claimed.

For this partition $c_n^H(L) \geq \dim_F S^F(\lambda)$. Also $((2k)^t) \leq \lambda$, i.e $D_{\lambda}$ contains the $t\times 2k$-box. By applying the Branching rule $n - 2kt$ times we see that $\dim_F S^F(\lambda) \geq \dim_F S^F((2k)^t))$. Finally 
$$\dim_F S^F((2k)^t)) \geq \frac{2kt!}{((2k+t)!)^t} \simeq \frac{\sqrt{4\pi tk}(\frac{2kt}{e})^{2kt}}{(\sqrt{2\pi(2k+t)}(\frac{2k+t}{e})^{2k+t})^t} \simeq C_1 k^{C_2} t^{2kt},$$
for some constants $C_1 \geq 0,~C_2 \in \mathbb{Q}$ as $k \rightarrow \infty$. This finishes the under bound.

The upper bound is also classical. For this consider $H$-polynomials as $n$-linear maps from $L$ to $L$. Then the map $V_n^H \rightarrow \Hom_F(L^{\otimes n}, L)$ has kernel $V_n^H \cap \Id^H(L)$. Thus $c_n^H(L) \leq \dim \Hom_F(L^{\otimes n}, L) = (\dim L)^{n+1}$.
\end{proof}

\begin{remark} \label{ss niet nodig in graded case}
\begin{enumerate}
\item Suppose $H = (FS)^*$ for some semigroup $S$. In this case, Theorem \ref{exp is dim voor H-simple} follows immediately from well known results and, moreover, one has not to assume $L$ to be semisimple as ungraded algebra. Indeed, in \cite[Proposition 1.12.]{ElKo} it is proven that if $L$ is $S$-graded-simple then $S$ is actually a commutative group. Moreover in \cite[Proposition 3.1]{PagRepZai} it is proven that $L$ is semisimple (as ungraded algebra) with isomorphic simple components whenever $L$ is group-graded-simple. Finally, by \cite[Theorem 1]{ASGordienko5} a finite dimensional Lie algebra graded by an arbitrary group satisfies the graded version of Amitsur conjecture and more precisely $\exp^G L = \dim_F L$ if $L$ is $G$-graded simple. 
\item It would be interesting to have an example of a generalized action by a bi-algebra which is not an Hopf algebra (which by the previous point is impossible for semigroup-algebras). If such an example would not exists, then by Gordienko's theorem \cite[Theorem 9]{ASGordienko5}, we can drop in \Cref{exp is dim voor H-simple} the condition that $L$ is semisimple.
\end{enumerate}
\end{remark}

\begin{corollary}\label{H-ss integer exp}
Let $L$ be an $H$-semisimple, semisimple Lie algebra and let $L= L_1 \oplus \ldots \oplus L_m$ with $L_i$ an $H$-simple algebra . Then $\exp^H(L) = \max_{1 \leq i \leq m} \{ \dim_F L_i \}$. Then $\exp^H(L) = \max_{1 \leq i \leq m} \{ \dim_F L_i \}$.
\end{corollary}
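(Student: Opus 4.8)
\textbf{Proof plan for Corollary \ref{H-ss integer exp}.}
The strategy is to reduce the statement about $L = L_1 \oplus \cdots \oplus L_m$ to the $H$-simple case already settled in \Cref{exp is dim voor H-simple}. First I would establish the upper bound $\exp^H(L) \leq \max_i \dim_F L_i$. For this I would proceed as in the last paragraph of the proof of \Cref{exp is dim voor H-simple}: regard multilinear $H$-polynomials as $n$-linear maps $L^{\otimes n} \to L$. The key extra observation is that, since $L = [L,L]$ (each $L_i$ being $H$-simple hence perfect) and since any multilinear Lie polynomial of degree $n \geq 2$ takes values in $[L,L] = L$, one may use the decomposition $L = \bigoplus_i L_i$ as $H$-modules: a multilinear $H$-polynomial evaluated on homogeneous arguments from the various $L_i$ lands, by the compatibility rule \eqref{gen action} and the fact that $[L_i, L_j] = 0$ for $i \neq j$, inside a single summand $L_j$. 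Hence $c_n^H(L) \leq \sum_{j=1}^m \dim \Hom_F\big( (\bigoplus_i L_i)^{\otimes n}, L_j\big)$-type estimates can be sharpened to show $c_n^H(L) \leq m \cdot \binom{\text{stuff}}{} \cdot (\max_i \dim_F L_i)^{n+1}$, so that $\limsup_n \sqrt[n]{c_n^H(L)} \leq \max_i \dim_F L_i$. The cleanest way to phrase this: restrict a non-identity $f$ to evaluations in a fixed $L_j$; then $c_n^H(L) \leq \sum_j c_n^H(L_j) + (\text{mixed terms that vanish or are absorbed})$, and each $c_n^H(L_j) \leq (\dim_F L_j)^{n+1}$ by \Cref{exp is dim voor H-simple}.

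For the lower bound, the point is simply that $\Id^H(L) = \bigcap_{i=1}^m \Id^H(L_i) \subseteq \Id^H(L_{i_0})$, where $L_{i_0}$ is a component of maximal dimension; equivalently there is a surjection of relatively free $H$-algebras, hence $c_n^H(L) \geq c_n^H(L_{i_0})$ for every $n$. Indeed, any $H$-identity of $L$ is in particular an $H$-identity of the $H$-invariant subalgebra $L_{i_0}$ (here one uses that $L_{i_0}$ is an $H$-submodule and a Lie subalgebra, which is immediate from the direct sum decomposition and $H$-invariance of each $L_i$). Therefore $\liminf_n \sqrt[n]{c_n^H(L)} \geq \liminf_n \sqrt[n]{c_n^H(L_{i_0})} = \exp^H(L_{i_0}) = \dim_F L_{i_0} = \max_i \dim_F L_i$ by \Cref{exp is dim voor H-simple}. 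Combining the two bounds yields that the limit exists and equals $\max_{1 \leq i \leq m}\{\dim_F L_i\}$.

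The main obstacle I anticipate is making the upper bound genuinely tight rather than just $O((\sum_i \dim_F L_i)^n)$: one must exploit that a multilinear Lie monomial cannot mix different ideals, so that the "interesting" evaluations of any fixed multilinear $H$-polynomial are confined, argument-block by argument-block, to a single $L_j$, and then count accordingly. This is a standard but slightly fiddly bookkeeping step; once it is in place the polynomial (in $n$) number of ways to distribute the $n$ slots among the $m$ components contributes only a polynomial factor, which disappears under the $n$-th root. Everything else — perfectness of $H$-simple Lie algebras, $H$-invariance of the components, and the reduction to \Cref{exp is dim voor H-simple} — is routine.
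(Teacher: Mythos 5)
Your proposal is correct and follows essentially the same route as the paper: the lower bound comes from $\Id^H(L)\subseteq \Id^H(L_{i_0})$ for a component $L_{i_0}$ of maximal dimension (which is semisimple and $H$-simple, so \Cref{exp is dim voor H-simple} applies), and the upper bound rests on the observation that, the bracket being component-wise and each $L_i$ being $H$-invariant, a multilinear monomial vanishes as soon as its arguments meet two different components, whence $V_n^H\cap\Id^H(L)=V_n^H\cap\bigcap_i\Id^H(L_i)$. The only divergence is how the upper bound is finished: you conclude directly from $c_n^H(L)\le\sum_i c_n^H(L_i)\le m\,(\max_i\dim_F L_i)^{n+1}$, whereas the paper deduces from the same observation that $L$ satisfies the Capelli identity of rank $\max_i\dim_F L_i+1$ and cites a theorem of Giambruno--Zaicev; your version is, if anything, more self-contained, and your worry about "fiddly bookkeeping" for mixed evaluations is unfounded since those evaluations vanish outright.
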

\begin{proof}
Since $L_i$ is a subalgebra of $L$, $\Id^H(L) \subseteq \Id^H(L_i)$ and thus 
$$\max\limits_{1 \leq i \leq t} \exp^H(L_i) \leq \liminf\limits_{n \rightarrow \infty} \sqrt[n]{c_n^H(L)}.$$

Now, note that since $L= \bigoplus_{i=1}^m L_i$ is a direct sum of Lie algebras, the Lie bracket can be seen as being the component-wise Lie bracket, i.e.\ $\left[ (l_1, \cdots, l_m),(l_1^{\prime}, \cdots, l_m^{\prime})\right] = \left([l_1,l_1^{\prime}], \cdots, [l_m, l_m^{\prime}]\right)$. Let $\mathcal{B}$ be a basis of $L$ consisting of the union of a fixed basis of each $L_i$. For a multilinear polynomial it is enough to evaluate basis elements in order to check whether it is a polynomial identity. Thus $V_n^H(F) \cap \Id^{H}(L) = V_n^H(F) \cap \bigcap_{i=1}^m \Id^H(L_i)$. 

By \cite[Theorem 12.2.13]{GiaZaibook} the statement now follows if $L$ satisfies $Q_{\max_i \dim L_i, k}= \bigcup_{n \in \mathbb{N}} Q_{\max_i \dim L_i, k,n}$, i.e.\ the Capelli identity of rank $\max_i \dim_F (L_i)+1$. However, by above remark this is clear.
\end{proof}

\bibliographystyle{plain}
\bibliography{bibLieAlgVb}

\end{document}